\newcommand{\PreserveBackslash}[1]{\let\temp=\\#1\let\\=\temp}
\newcolumntype{C}[1]{>{\PreserveBackslash\centering}p{#1}}
\newcolumntype{R}[1]{>{\PreserveBackslash\raggedleft}p{#1}}
\newcolumntype{L}[1]{>{\PreserveBackslash\raggedright}p{#1}}
\def\ba{\begin{array}}
\def\ea{\end{array}}
\newcommand{\beq}{\begin{equation}}
\newcommand{\eeq}{\end{equation}}
\newcommand{\bq}{\begin{eqnarray}}
\newcommand{\eq}{\end{eqnarray}}
\newcommand{\bqn}{\begin{eqnarray*}}
\newcommand{\eqn}{\end{eqnarray*}}
\newcommand{\bee}{\begin{enumerate}}
\newcommand{\eee}{\end{enumerate}}
\newcommand{\bi}{\begin{itemize}}
\newcommand{\ei}{\end{itemize}}
\newtheorem{lemma}{\textbf{Lemma}}
\newtheorem{theorem}{\textbf{Theorem}}
\newtheorem{remark}{\textbf{Remark}}
\newcommand{\slow}[1]{\ifthenelse{\boolean{showcomments}}
{ \textcolor{red}{(SL:  #1)}}{}}
\newcommand{\you}[1]{\ifthenelse{\boolean{showcomments}}
{ \textcolor{blue}{(PCY:  #1)}}{}}
\begin{document}

\title{Scheduling of EV Battery Swapping, II:  Distributed Solutions}

\author{Pengcheng~You,~\IEEEmembership{Student~Member,~IEEE,}
        Steven~H.~Low,~\IEEEmembership{Fellow,~IEEE,}\\
        Liang~Zhang,~\IEEEmembership{Student~Member,~IEEE,}
        Ruilong~Deng,~\IEEEmembership{Member,~IEEE,}\\
        Georgios~B.~Giannakis,~\IEEEmembership{Fellow,~IEEE,}
        Youxian~Sun,
        and Zaiyue~Yang,~\IEEEmembership{Member,~IEEE}
\thanks{P. You, Y. Sun and Z. Yang are with the State Key Laboratory of Industrial Control Technology, Zhejiang University, Hangzhou, 310027, China (e-mail: pcyou@zju.edu.cn; yxsun@iipc.zju.edu.cn; yangzy@zju.edu.cn).}
\thanks{P. You and S. H. Low are with the Engineering and Applied Science Division, California Institute of Technology, Pasadena, CA 91125 USA (e-mail: pcyou@caltech.edu; slow@caltech.edu).}
\thanks{R. Deng is with the Department of Electrical and Computer Engineering, University of Alberta, Edmonton, AB, Canada T6G 1H9 (e-mail: ruilong@ualberta.ca).}
\thanks{L. Zhang and G. B. Giannakis are with the Department of Electrical and Computer Engineering, University of Minnesota, Minneapolis, MN 55455 USA (e-mail: zhan3523@umn.edu; georgios@umn.edu).}
}

\maketitle

\begin{abstract}
In Part I of this paper we formulate an optimal scheduling problem for battery swapping that
assigns to each electric vehicle (EV) a best station to swap its depleted
battery based on its current location and state of charge.
The schedule aims to minimize a weighted sum of total travel distance and generation
cost over both station assignments and power flow variables,
subject to EV range constraints, grid operational constraints and AC power flow equations.
We propose there a centralized solution based on the second-order cone programming (SOCP) relaxation of optimal power flow (OPF) and generalized Benders decomposition
that is suitable when global information is available.  In this paper we propose two
distributed solutions based on the alternating direction method of multipliers (ADMM) and dual decomposition respectively that are
suitable for cases where the distribution grid, battery stations and EVs are managed
by separate entities.  Our algorithms allow these entities to make individual decisions
but coordinate through privacy-preserving information exchanges to jointly solve an
approximate version of the joint battery swapping scheduling and OPF problem.
We evaluate our algorithms through simulations.
\end{abstract}

\begin{IEEEkeywords}
Electric vehicle, joint battery swapping scheduling and OPF, privacy preserving, distributed algorithms.
\end{IEEEkeywords}

\IEEEpeerreviewmaketitle

\section{Introduction}

\subsection{Motivation}

In Part I of this paper we formulate an optimal scheduling problem for battery swapping that
assigns to each EV a best station to swap its depleted
battery based on its current location and state of charge. The station assignment not only determines EVs' travel distance, but can also impact significantly the
power flows on a distribution network because batteries are large loads.
The schedule aims to minimize a weighted sum of total travel distance and generation cost over both station assignments and power flow variables,
subject to EV range constraints, grid operational constraints and AC power flow equations.
This joint battery swapping scheduling and OPF problem
is nonconvex and computationally difficult because the AC power flow equations are
nonlinear and the assignment variables are binary.

We propose in Part I a centralized solution based on the SOCP relaxation of
OPF, which deals with the nonconvexity of power flow equations, and generalized
Benders decomposition,
which deals with the binary nature of assignment variables.
When the relaxation of OPF is exact, this
approach computes a global optimum.
It is however suitable only for cases where the
distribution network, battery stations, and EVs are managed centrally by the same
operator, as is the current electric taxi program of State Grid in China.
We expect that, as EVs proliferate and as battery swapping models mature, an equally
(if not more) likely business model will emerge where the distribution grid is managed
by a utility company, battery stations are managed by a station operator (or multiple
station operators), and EVs may be managed by multiple taxi companies or by
individual drivers.  In particular, the set of EVs to be scheduled may include a large number
of private cars in addition to fleet vehicles.  The centralized approach of Part I will not be
suitable for these future scenarios, for two reasons.

First, the operator in the centralized approach needs global information such as the grid
topology, impedances, operational constraints, background loads, availability
of fully-charged batteries at each station, locations and states of charge of EVs, etc.
In the future, the grid, battery stations, and EVs will likely be operated by different entities
that do not share their private information.
Second, generalized Benders decomposition solves a mixed-integer convex
problem in each iteration and is computationally expensive.  It is hard to scale it
to compute in real time an optimal station assignment and an (relaxed) OPF solution in
future scenarios where the numbers of EVs and battery stations are large.
In this paper we aim to develop distributed solutions that preserve private information
and are more suitable for these future scenarios.

Instead of generalized Benders decomposition, we relax the binary assignment variables
to real variables in $[0, 1]$.  With both the SOCP relaxation of OPF and
the relaxation of binary variables, the resulting approximate problem of joint
battery swapping scheduling and OPF is a convex program.   This allows us to develop two
distributed solutions where different entities make their individual decisions
but are coordinated through privacy-preserving information exchanges to jointly
solve the global problem.
The first solution is based on ADMM
and is for cases where the distributed grid is managed by
a utility company and all stations and EVs are managed by a station operator.
Here the utility company maintains a local estimate of some \emph{aggregate}
assignment information that is computed by the station operator, and they
exchange the aggregate information and its estimate to attain consensus.
The second solution is based on dual decomposition and is for cases
where the distributed grid is managed by a utility company, all battery stations by
a station operator, and all EVs are individually operated.
The utility company still sends its local estimate to the station operator while the
station operator does not need to send the utility company the aggregate
assignment information, but only some Lagrange multipliers.  The station
operator also \emph{broadcasts} Lagrange multipliers to all EVs and
individual EVs respond by sending the station operator their choices of stations
for battery swapping based on the Lagrange multiplier values and their
current locations and driving ranges.
In both approaches, given aggregate information and Lagrange multipliers
that are exchanged,  different entities only need their own local states
(e.g., power flow variables) and local data (e.g., impedance values,
available batteries, EV locations and driving ranges) to
iteratively compute their own decisions.   See Fig. \ref{fig:distributed}.
\begin{figure*}[htbp]
\centering
\includegraphics[width=0.6\textwidth]{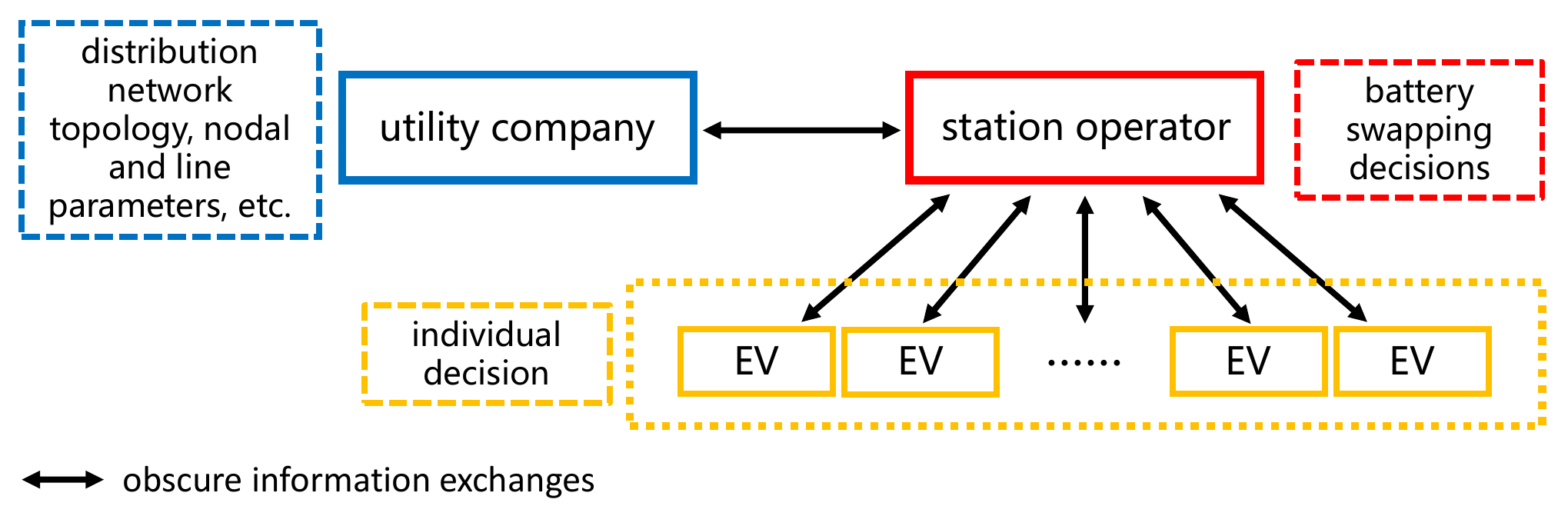}
\caption{Distributed framework. }
\label{fig:distributed}
\end{figure*}
As we will discuss later,
both distributed algorithms are able to converge to a solution, in which the station assignment may be non-binary, to the relaxed version of the joint battery swapping scheduling and OPF problem. However, we show it is easy to discretize the obtained assignment to achieve a binary one that is close to optimum, and the SOCP relaxation is usually exact \cite{Low2014a, Low2014b}.

\subsection{Literature}

The privacy issues in smart grids have drawn much attention from academia due to the vision of more and more interconnections in power systems to strive for strength, security, stability and efficiency \cite{mcdaniel2009security}. Most previous work lays emphasis on the privacy issue of residential loads from the perspective of smart meters \cite{kalogridis2010privacy, efthymiou2010smart, liu2016optimal, yang2014optimal}, and only a small portion of the related literature realizes the significance of EVs' privacy despite their expected high penetration in the future and the resulting giant impact on smart grids. Currently, privacy concerns for EVs mainly arise in Vehicle-to-Grid networks where the status of EVs has to be continuously monitored. Tseng \cite{tseng2012secure} proposes a secure and privacy-preserving communication protocol based on restrictive partially blind signatures to protect EV owners from identity and location information leakage. Liu \emph{et al.} \cite{liu2012aggregated} design an aggregated-proofs based authentication scheme to collect EVs' status without revealing any individual privacy. Besides, Nicanfar \emph{et al.} \cite{nicanfar2013security} present different situations where EVs may be involved in the smart grid context, and provide the corresponding authentication schemes to preserve EV owners' privacy. Nonetheless, the privacy leakage of scheduling information in EV battery charging/swapping is often ignored.

In contrast to centralized algorithms, distributed (decentralized) algorithms inherently preserve privacy as global information is not necessary in local computation. Hence there is a large literature of distributed algorithm design with various applications for privacy-preserving purposes. Liu \emph{et al.} \cite{liu2016optimal} schedule thermostatically controlled devices and batteries in a household to hide its actual load profiles such that no privacy can be inferred from electricity usage. Yang \emph{et al.} \cite{yang2014optimal} design an online control algorithm of batteries that only uses the observations of the current load requirement and electricity price to strike a tradeoff between the smart meter data privacy and the electricity bill for customers. Clifton \emph{et al.} \cite{clifton2002tools} present a toolkit of distributed algorithms that can be combined for specific privacy-preserving data mining applications. Zhou \emph{et~al.} \cite{zhou2015psmpa} devise a multi-level privacy-preserving cooperative authentication scheme to realize different levels of privacy requirement for a distributed m-healthcare cloud computing system that shares personal health information among healthcare providers. Liu \emph{et al.} \cite{liu2016distributed} propose a consensus-based distributed speed advisory system that optimally determines a common speed for a given area in a privacy-aware manner to minimize the group emissions of fuel vehicles or the group battery consumptions of EVs.

\vspace{0.13in}
The remainder of this paper is organized as follows. We revisit the problem formulation in Sec.~\ref{sec:recap}. The proposed distributed solutions via ADMM and dual decomposition are elaborated in Sec.~\ref{sec:solution}, followed by numerical results in Sec.~\ref{sec:numerical}. At last, Sec.~\ref{sec:conclusion} concludes.

\section{Problem formulation}\label{sec:recap}

We now summarize the joint battery swapping scheduling and OPF problem in Part I \cite{you2016EVscheduling1},
using the notations defined there.

An assignment of stations\footnote{Throughout this paper stations refer to battery stations.} to EVs
for battery swapping is represented by the binary variables $u := (u_{aj}, a\in \mathbb A, j\in \mathbb N_w)$ where
\bqn
u_{aj} & = & \left\{
\begin{split}
& 1\quad \mathrm{if~station}~j~\mathrm{is~assigned~to~EV}~a\\
& 0\quad \mathrm{otherwise}
\end{split}
\right.
\eqn
A station assignment must satisfy the following conditions.
\begin{subequations}
\begin{itemize}
\item The assigned station must be in every EV's driving range:
\beq\label{eq:con.tax3}
      u_{aj}d_{aj} \le  \gamma_a c_a,\quad j\in\mathbb{N}_w, a\in\mathbb{A}
\eeq
\item Exactly one station is assigned every EV:
\beq\label{eq:con.tax1}
  \sum\limits_{j\in\mathbb{N}_w}{u_{aj}}=1,\quad a\in\mathbb{A}
\eeq
\item Every assigned station has enough fully-charged batteries for EVs:
\beq\label{eq:con.tax2}
  \sum\limits_{a\in\mathbb{A}}{u_{aj}}\le m_j,\quad j\in\mathbb{N}_w
\eeq
\end{itemize}
\label{eq:con.tax}
\end{subequations}

A station assignment will add loads to the distribution network at buses in $\mathbb N_w$
that supply electricity to stations.  The net power injections
$s_j=p_j+\textbf{i}q_j$ depend on the station assignment $u$ according to
\begin{small}
\begin{subequations}
\bq
p_j & = & \left\{
\begin{split}
&p_j^g-p_j^b-r \left(M_j-m_j+\sum_{a\in\mathbb{A}}{u_{aj}}\right),\quad j\in\mathbb{N}_w\\
&p_j^g-p_j^b,\quad j\in\mathbb{N}/\mathbb{N}_w
\end{split}
\right.
\label{eq:inj.p}
\\
q_j & =  & q_j^g-q_j^b,\quad j\in\mathbb{N}
\label{eq:inj.q}
\eq
\label{eq:inj}%
\end{subequations}\end{small}%
An active distribution network is modeled by the \emph{DistFlow} equations from \cite{baran1989optimals}:
\begin{subequations}\label{eq:df}
\begin{align}\label{eq:df.a}
\sum\limits_{k:(j,k)\in\mathbb{E}}{S_{jk}} &=S_{ij}-z_{ij}l_{ij}+s_j,\quad j\in\mathbb{N}\\\label{eq:df.b}
v_j-v_k &=2 \mathrm{Re} (z_{jk}^HS_{jk})-|z_{jk}|^2l_{jk},\quad j\rightarrow k \in\mathbb{E}\\\label{eq:df.c}
v_jl_{jk} &= |S_{jk}|^2, \quad j\rightarrow k \in\mathbb{E}
\end{align}
\end{subequations}
The power flow quantities must satisfy the following constraints on grid operation:
\begin{subequations}
\begin{itemize}
\item voltage stability
  \beq\label{eq:con.vol}
  \underline{v}_j \le v_j \le\overline{v}_j, \quad j \in \mathbb{N}
  \eeq
\item generation capacity
  \bq\label{eq:con.gen.p}
  \underline{p}^g_j \le p^g_j \le\overline{p}^g_j, \quad j \in \mathbb{N} \\\label{eq:con.gen.q}
  \underline{q}^g_j \le q^g_j \le\overline{q}^g_j, \quad j \in \mathbb{N}
  \eq
\item line transmission capacity
  \beq\label{eq:con.powflow}
  |S_{jk}| \le \overline{S}_{jk}, \quad j\rightarrow k\in\mathbb{E}
  \eeq
\end{itemize}
\label{eq:con.PF}
\end{subequations}

The joint battery swapping scheduling and OPF problem is to minimize a weighted sum of total generation cost in the distribution network
and total travel distance of EVs over both station assignments and power flow variables:
\bq\label{eq:primal problem}
\min\limits_{{u},{s}, {s}^g, \atop {v},{l}, {S}} && \sum\limits_{j\in\mathbb{N}}{f_j(p_j^g)} + \alpha \sum\limits_{a\in\mathbb{A}}\sum\limits_{j\in\mathbb{N}_w} d_{aj} u_{aj} \\\nonumber
\rm{s.t.} &&
\eqref{eq:con.tax}
\eqref{eq:inj}
\eqref{eq:df}
\eqref{eq:con.PF}
\\\nonumber
&& u_{aj} \in \{0, 1\}, \ \ a\in \mathbb A,\, j\in\mathbb N_w
\eq


\section{Distributed solutions}\label{sec:solution}

\subsection{Relaxations}

The joint battery swapping scheduling and OPF problem \eqref{eq:primal problem} is computationally difficult for two reasons.
First, the quadratic equality \eqref{eq:df.c} is nonconvex.
Second, the station assignment variables $u$ are binary.

To deal with the first difficulty, we replace \eqref{eq:df.c} by an inequality, i.e., replace
the \emph{DistFlow} equations \eqref{eq:df} in the problem \eqref{eq:primal problem}
by:
\begin{subequations}\label{eq:df2}
\begin{align}\label{eq:df2.a}
\sum\limits_{k:(j,k)\in\mathbb{E}}{S_{jk}} &=S_{ij}-z_{ij}l_{ij}+s_j,\quad j\in\mathbb{N}\\\label{eq:df2.b}
v_j-v_k &=2 \mathrm{Re} (z_{jk}^HS_{jk})-|z_{jk}|^2l_{jk},\quad j\rightarrow k \in\mathbb{E}\\\label{eq:df2.c}
v_jl_{jk} &\geq |S_{jk}|^2, \quad j\rightarrow k \in\mathbb{E}
\end{align}
\end{subequations}
Fixing any assignment $u\in\{0,1\}^{AN_w}$, the optimization problem is then a convex
problem.  If an optimal solution to the SOCP relaxation
attains equality in \eqref{eq:df2.c} then the solution also satisfies \eqref{eq:df} and
is therefore optimal (for the given $u$).  In this case, we say that the SOCP relaxation
is \emph{exact}.
Sufficient conditions are known that guarantee the exactness of the SOCP relaxation;
see \cite{Low2014a, Low2014b} for a comprehensive tutorial and references therein.
Even when these conditions are not satisfied, SOCP relaxation for practical radial networks
is still often exact, as confirmed also by our simulations in
Sec.~\ref{sec:numerical}.

To deal with the second difficulty, we use generalized Benders decomposition in Part I
\cite{you2016EVscheduling1}.  This approach computes an optimal solution when
SOCP relaxation is exact, but it is computationally expensive as it requires solving
a binary linear program (as well as an SOCP relaxation) in each iteration of the generalized
Benders decomposition procedure.   Moreover, the computation is centralized and is
suitable only when a single organization, e.g., State Grid in China, operates all of
the distribution grid, stations, and EVs.

In this paper, we develop distributed solutions that are suitable for cases where
these three are operated by different organizations that do not share their private
information.
To deal with the second difficulty, we relax the binary variables $u_{aj}$ to real
variables $u_{aj}\in[0,1]$, $a\in\mathbb{A},~j\in\mathbb{N}_w$.
The constraints \eqref{eq:con.tax} are then replaced by:
\begin{subequations}
\bq
\!\!\!\!\!\!\!\!
\label{eq:con.tax3.2}
u_{aj}\in\left[0,1\right], \ u_{aj}=0 \ \mathrm{if}~d_{aj} > \gamma_a c_a,
& \!\!\!\!\!\!\!\! &
j\in\mathbb{N}_w,~a\in\mathbb{A}
\\
\sum_{j\in\mathbb N_w} u_{aj} = 1,
& \!\!\!\!\!\!\!\! &   a\in\mathbb{A}
\label{eq:con.tax1.2}
\\
\sum_{a\in\mathbb A} u_{aj} \leq m_j,
& \!\!\!\!\!\!\!\! &   j\in\mathbb N_w
\label{eq:con.tax2.2}
\eq
\label{eq:con.tax.2}%
\end{subequations}

In summary, in this paper we solve the following convex relaxation of \eqref{eq:primal problem}:
\bq\label{eq:primal problem tr}%
\min\limits_{{u},{s}, {s}^g, \atop {v},{l}, {S}} && \sum\limits_{j\in\mathbb{N}}{f_j(p_j^g)} + \alpha \sum\limits_{a\in\mathbb{A}}\sum\limits_{j\in\mathbb{N}_w} d_{aj} u_{aj} \\\nonumber
\rm{s.t.} &&
\eqref{eq:inj}
\eqref{eq:con.PF}
\eqref{eq:df2}
\eqref{eq:con.tax.2}
\eq
This problem has a convex objective and convex quadratical constraints.
After an optimal solution $(x^*, u^*)$ of \eqref{eq:primal problem tr} is obtained, we
check if $x^*$ attains equality in \eqref{eq:df2.c}.
We also discretize $u_{aj}^*$ into
$\{0, 1\}$, e.g., by setting  for each EV $a$ a single largest $u_{aj}$ to 1 and the rest
to 0.  An alternative is to randomize the station assignment using $u_{aj}^*$ as
a probability distribution. As we will show later, the discretization can be readily implemented and achieve an assignment close to optimum.


\subsection{Distributed solution via ADMM}
\label{subsec:ADMM}

The problem \eqref{eq:primal problem tr} decomposes naturally into two subproblems,
one on station assignments over $u$ and the other on OPF over
$(s, {s}^g,  {v},{l}, {S})$.   The station assignment subproblem will be solved by a station operator
that operates the network of stations.
The OPF subproblem will be solved by a utility company.
Our goal is to design a distributed algorithm for them to jointly solve \eqref{eq:primal problem tr}
without sharing their private information.

These two subproblems are coupled only in
\eqref{eq:inj.p} where the utility company needs the load $r \left(M_j-m_j+ \sum_{a\in\mathbb{A}} u_{aj} \right)$ of station $j$
 in order to compute the net real power injection $p_j$.  This quantity depends on the total number of EVs that each station $j$ is assigned to and is computed by the station operator.
Their computation
 can be decoupled by introducing an auxiliary variable $w_j$ at each bus (station)
$j$ that represents the utility company's estimate of the quantity $r \left(M_j-m_j+ \sum_{a\in\mathbb{A}} u_{aj} \right)$, and
requiring that they be equal at optimality.

Specifically, recall the station assignment variables $u$, and denote the power flow variables by
$x := (w, s, {s}^g,  {v},{l}, {S})$ where $w := (r \left(M_j-m_j+ \sum_{a\in\mathbb{A}} u_{aj} \right), j\in\mathbb N_w)$.   Separate the objective function by defining
\bqn
f(x) & := & 	\sum\limits_{j\in\mathbb{N}}{f_j(p_j^g)}
\\
g(u) & := & 	\alpha
	\sum\limits_{a\in\mathbb{A}}\sum\limits_{j\in\mathbb{N}_w} d_{aj} u_{aj}
\eqn
Replace the coupling constraints \eqref{eq:inj} by constraints local to bus $j$:
\begin{subequations}
\bq
p_j & = & \left\{
\begin{split}
&p_j^g-p_j^b-w_j,\quad j\in\mathbb{N}_w\\
&p_j^g-p_j^b,\quad j\in\mathbb{N}/\mathbb{N}_w
\end{split}
\right.
\label{eq:inj.pw}
\\
q_j & =  & q_j^g-q_j^b,\quad j\in\mathbb{N}
\label{eq:inj.qw}
\eq
\label{eq:tax2injw}%
\end{subequations}
Denote the local constraint set for $x$ by
\bqn
\mathbb X & := & \{ x\in \mathbb R^{(|\mathbb N_w| + 5|\mathbb N|+3|\mathbb E|)} \, :\,
	x \text{ satisfies } \eqref{eq:con.PF} \eqref{eq:df2} \eqref{eq:tax2injw} \}
\eqn
Denote the local constraint set for $u$ by
\bqn
\mathbb U & := & \{ u\in \mathbb R^{A N_w} \, :\,
	u \text{ satisfies } \eqref{eq:con.tax.2} \}
\eqn
To simplify notation, define $u_j := \sum_{a\in\mathbb A} u_{aj}$
for $j\in\mathbb N_w$.
Then the problem \eqref{eq:primal problem tr} is equivalent to
\begin{subequations}
\bq
\min\limits_{x,u} && f(x) + g(u)
\label{eq:PPA.1}
\\
\rm{s.t.} &&  x\in\mathbb{X},~u\in\mathbb{U}
\label{eq:PPA.2}
\\
	& & 	w_j = r \left(M_j-m_j+ u_j \right), \ j\in\mathbb N_w
\label{eq:PPA.3}
\eq\label{eq:primal problem admm}%
\end{subequations}%

We now apply ADMM to \eqref{eq:primal problem admm}.
Let $\lambda$ be the Lagrange multiplier vector corresponding to the coupling constraint
\eqref{eq:PPA.3}, and define the augmented Lagrangian:
\begin{subequations}
\bq
\label{eq:augmentedL.1}
L_{\rho}(x, {u},{\lambda}) & := & f(x) + g(u) +  h_\rho(w, u, \lambda)
\eq
where $h_\rho$ depends on $(x, u)$ only through $(w_j, u_j, j\in\mathbb N_w)$:
\beq
\begin{split}
h_\rho(w, u, \lambda) := &
	\sum\limits_{j\in\mathbb{N}_w}\lambda_j[w_j-r \left(M_j-m_j+ u_j \right)]\\
	& + \ \frac{\rho}{2} \sum\limits_{j\in\mathbb{N}_w}[w_j- r \left(M_j-m_j+ u_j \right)]^2
\label{eq:augmentedL.2}
\end{split}
\eeq
\label{eq:augmentedL}%
\end{subequations}
and $\rho$ is the step size for dual variable $\lambda$ updates.
The standard ADMM procedure is to iteratively and sequentially update $(x, u, \lambda)$:
for $n=0, 1, \dots$,
\begin{small}
\begin{subequations}
\bq
{x}(n+1) &\!\!\!\!\!\! := \!\!\!\!\!\! & \arg\min\limits_{{x}\in\mathbb{X}}\ f(x) + h_\rho(w, {u}(n), {\lambda}(n))
\label{eq:xupdate}
\\
{u}(n+1) & \!\!\!\!\!\! :=  \!\!\!\!\!\! & \arg\min\limits_{{u}\in\mathbb{U}}\ g(u) + h_\rho({w}(n+1), u, {\lambda}(n))
\label{eq:uupdate}
\\
{\lambda}_j(n+1) & \!\!\!\!\!\! := \!\!\!\!\!\! & \lambda_j(n)\, + \, \rho[w_j(n+1)\nonumber\\
& \!\!\!\!\!\! ~~ \!\!\!\!\!\! & - r(M_j-m_j+u_j(n+1))], \ j\in\mathbb{N}_w
\label{eq:lupdate}
\eq
\label{eq:admmupdates}
\end{subequations}\end{small}%
\begin{remark}
\bee
\item The $x$-update \eqref{eq:xupdate} is carried out by the utility company and
	involves minimizing a convex objective with convex quadratic constraints.
	The $(u,\lambda)$-updates \eqref{eq:uupdate}\eqref{eq:lupdate} are carried out
	by the station operator and the $u$-update minimizes a convex quadratic
	objective with linear constraints.   Both can be efficiently solved.

\item The $x$-update by the utility company needs $(u(n), \lambda(n))$
	from the station operator in iteration $n$.  In fact, from \eqref{eq:augmentedL.2},
	the station operator does not need to communicate the detailed station assignment
	$u(n) = (u_{aj}(n), \, a\in\mathbb A, \, j\in\mathbb N_w)$ to the utility company but only
	the total numbers of EVs $(u_j(n),j\in\mathbb{N}_w)$ that stations $j$ are assigned to.

\item	The $(u,\lambda)$-updates  by the station
	operator need in iteration $n$ the utility company's estimate $w(n+1)$ of
	$(r(M_j-m_j+u_j(n+1)), j\in\mathbb N_w)$.

\item The reason that the $x$-update by the utility company needs $(u_j(n),j\in\mathbb{N}_w)$ and
	the $u$-update by the station operator needs $w(n+1)$ is the (quadratic)
	regularization term in $h_\rho$.	  This becomes unnecessary for the dual
	decomposition approach in Sec.~\ref{subsec:dd} without the regularization
	term.
\eee
\end{remark}
	The communication structure is illustrated in Fig.~\ref{fig:ADMM}.
	In particular, private information of the utility company, such as distribution network
	parameters $z_{jk}$, network states $(s(n), s^g(n), v(n), l(n), S(n))$, cost functions $f$,
	and operational constraints, as well as private information of the station operator, such as the total number of batteries $(M_j,j\in\mathbb{N}_w)$, the numbers of available fully-charged batteries $(m_j,j\in\mathbb{N}_w)$, how many EVs or where they are or their states of
	charge, and the detailed assignment $u(n)$, do not need to be communicated.
\begin{figure}[htbp]
\centering
\includegraphics[width=0.25\textwidth]{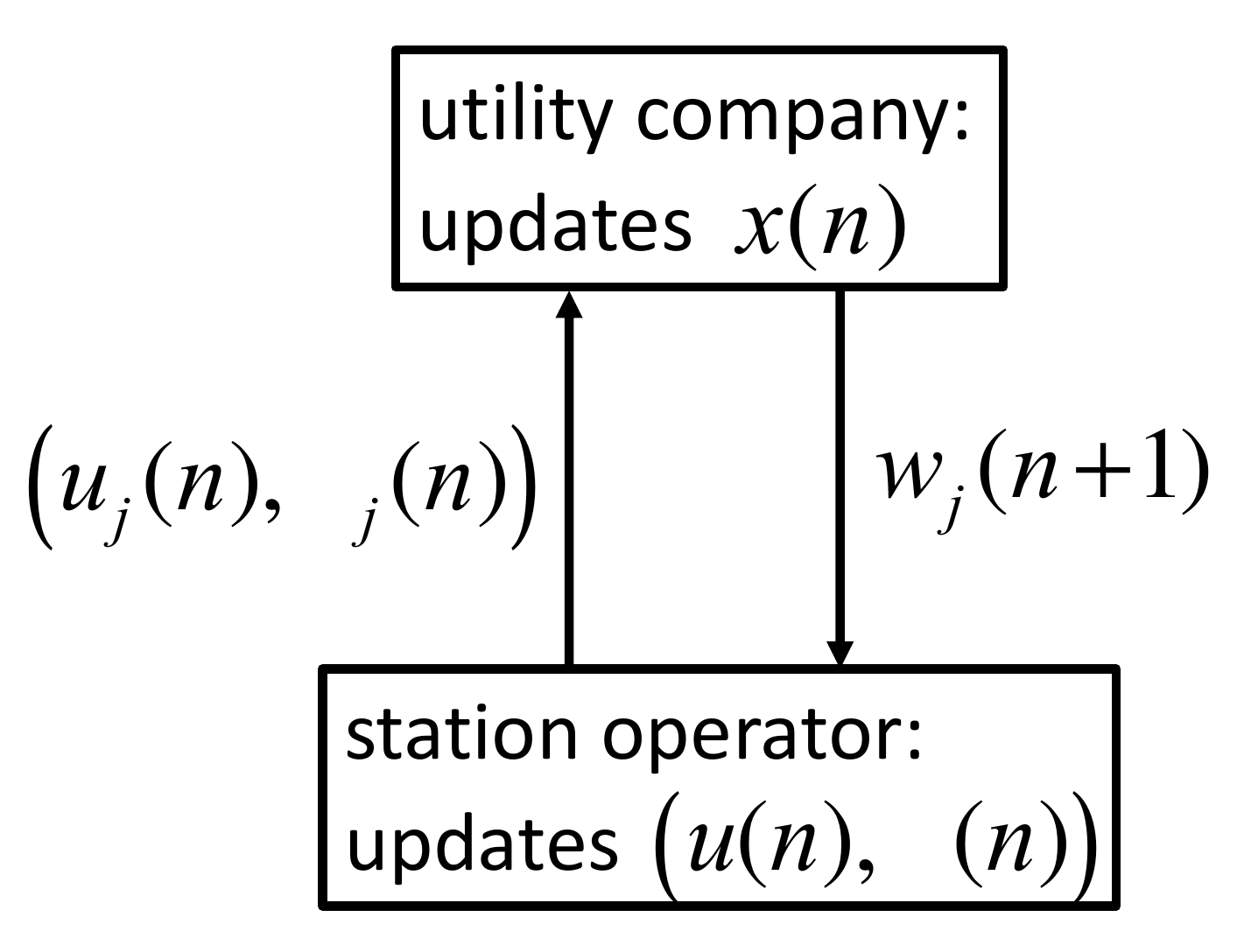}
\caption{Communication between utility company and station operator.}
\label{fig:ADMM}
\end{figure}

When the cost functions $f_j$ are closed, proper and convex and $L_0(x, u, \lambda)$ has
a saddle point, the ADMM iteration \eqref{eq:admmupdates} converges
in that, for all $j\in\mathbb N_w$, the mismatch $|w_j(n)-u_j(n)| \rightarrow 0$
and the objective $f(x(n))+g(u(n))$ converges to its minimum value \cite{boyd2011distributed}.
This does not automatically guarantee that $(x(n), u(n))$ converges to an optimal solution
to \eqref{eq:primal problem tr}.\footnote{In theory ADMM may converge and circulate around the set of optimal solutions, but never reach one. In practice a solution within a given error tolerance is acceptable.}   If $(x(n), u(n))$ indeed converges to a primal optimal solution
$(x^*, u^*)$, $u^*$ may generally not be binary.   We use a heuristic to derive a binary station assignment from $u^*$, as mentioned above.
Fortunately, the following result shows that the number of EVs $a$
for which a binary assignment needs to be derived from the obtained non-binary one
$(u^*_{aj}, j\in\mathbb N_w)$ is small.
\begin{theorem}\label{teo:number}
Suppose $(x^*, u^*)$ is an optimal solution to the relaxation \eqref{eq:primal problem tr},
then the number of EVs $a$ for which $u^*_{aj}<1$
for all $j\in\mathbb N_w$ is at most ${N_w(N_w-1)}/{2}$.
\end{theorem}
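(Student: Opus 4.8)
The plan is to reduce the statement to a pigeonhole-and-exchange argument on the assignment variables. The key observation is that the OPF block interacts with the assignment $u$ only through the aggregate station loads $u_j=\sum_{a\in\mathbb A}u_{aj}$: we have $w_j=r(M_j-m_j+u_j)$, and through \eqref{eq:tax2injw}, \eqref{eq:df2}, \eqref{eq:con.PF} this is the only coupling between $u$ and the OPF variables $x$, while the travel term $\alpha\sum_{a,j}d_{aj}u_{aj}$ is the only part of the objective of \eqref{eq:primal problem tr} that depends on $u$ beyond the $u_j$'s. Consequently, any perturbation of $u^*$ that leaves every $u_j$ unchanged keeps $x^*$ feasible with the same value of $f(x^*)$, and alters the objective only through the travel term. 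Call an EV $a$ \emph{fractional} if $u^*_{aj}<1$ for all $j\in\mathbb N_w$; since $\sum_{j\in\mathbb N_w}u^*_{aj}=1$, a fractional EV necessarily has at least two stations $j$ with $u^*_{aj}\in(0,1)$.

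Assume, for contradiction, that more than $N_w(N_w-1)/2=\binom{N_w}{2}$ EVs are fractional, and for each such EV $a$ select an unordered pair $\{j,k\}$ of distinct stations from its fractional support. Since there are only $\binom{N_w}{2}$ such pairs, two distinct fractional EVs $a\ne b$ select the same pair $\{j,k\}$, so that $u^*_{aj},u^*_{ak},u^*_{bj},u^*_{bk}\in(0,1)$. I would then apply the alternating $2\times 2$ exchange: for small $\epsilon$, replace $u^*_{aj},u^*_{bk}$ by $u^*_{aj}+\epsilon,\,u^*_{bk}+\epsilon$ and $u^*_{ak},u^*_{bj}$ by $u^*_{ak}-\epsilon,\,u^*_{bj}-\epsilon$, leaving all other entries unchanged; call the result $\tilde u$. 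The EV constraints \eqref{eq:con.tax1.2} for $a$ and $b$ are preserved (the $\pm\epsilon$ cancel within each row), the capacity constraints \eqref{eq:con.tax2.2} for $j$ and $k$ are preserved likewise, the four modified entries remain in $(0,1)$ for $|\epsilon|$ small, and every $u_j$ is unchanged; hence $(x^*,\tilde u)$ is feasible for \eqref{eq:primal problem tr} and its objective exceeds the optimum by $\alpha\epsilon\,(d_{aj}+d_{bk}-d_{ak}-d_{bj})$. Feasibility for both signs of $\epsilon$ together with optimality of $(x^*,u^*)$ forces $d_{aj}+d_{bk}-d_{ak}-d_{bj}=0$.

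The crux — and the step I would be most careful about — is ruling this out. Under the natural genericity assumption that no such $2\times 2$ distance gap $d_{aj}+d_{bk}-d_{ak}-d_{bj}$ (with $a\ne b$, $j\ne k$) vanishes, cost-neutrality is impossible, which is the desired contradiction and completes the proof. In the degenerate case where the gap can vanish, one works slightly harder: the exchange then merely produces another optimal solution, so one pushes $\epsilon$ to the boundary of the box until one of $u_{aj},u_{ak},u_{bj},u_{bk}$ hits $0$ or $1$ — at which point that EV either loses a support entry or becomes integral, strictly shrinking the fractional support — and iterates until no pigeonhole collision remains, leaving at most $\binom{N_w}{2}$ fractional EVs. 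The rest is routine bookkeeping: verifying that the exchange respects all of \eqref{eq:con.tax.2} and that freezing the $u_j$'s leaves the whole OPF block \eqref{eq:tax2injw}, \eqref{eq:df2}, \eqref{eq:con.PF} feasible and its cost unchanged.
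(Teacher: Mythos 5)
Your proposal is correct and follows essentially the same route as the paper's own proof: the paper's Lemma 1 is exactly your $2\times 2$ exchange on a shared station pair (keeping the column sums $u_j,u_k$ fixed so the OPF block is untouched), and the paper's counting of station pairs is your pigeonhole bound $\binom{N_w}{2}$; your $\pm\epsilon$ formulation and push-to-the-boundary handling of the cost-neutral case is just a cleaner rendering of the paper's case analysis. One shared caveat worth noting: in the degenerate (cost-neutral) case both arguments only produce \emph{some} optimal solution with at most $N_w(N_w-1)/2$ fractional EVs rather than bounding every optimal $u^*$, which is also all the paper's own proof establishes.
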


See Appendix \ref{sec:proof} for its proof. In practice, the number $N_w$ of stations is typically small compared with the number $A$ of EVs that request battery swapping. Therefore, it is expected that the discretization will attain a station assignment close to optimum, considering the subtle impact of individual EVs.


\subsection{Distributed solution via dual decomposition}\label{subsec:dd}

The ADMM-based algorithm assumes the station operator directly controls the station assignment to all EVs.
This requires that the station operator know the locations ($d_{aj}$), states of charge ($c_{a}$) and
performance ($\gamma_{a}$) of EVs.   Moreover, the aggregate EV information $u_j$ needs to
be provided to the utility company.
We now present another algorithm based on dual decomposition that is more suitable in
situations where it is undesirable or inconvenient to share private information
between the utility company, the station operator, and EVs.


In the original relaxation \eqref{eq:primal problem tr}, the update of the injections $p_j$ in \eqref{eq:inj}
by the utility company involves $u_j$ which are updated by the station operator.   These two computations
are decoupled in the ADMM-based solution by introducing the auxiliary variables $w_j$ at the utility
company and relaxing the constraint $w_j=r(M_j-m_j+u_j)$.
In addition, the station assignment $u$ must satisfy $u_j\leq m_j$ in
\eqref{eq:con.tax2.2}.  This
is enforced in the ADMM-based solution by the station operator that computes $u$ for the EVs.
To fully distribute the computation to individual EVs, we dualize $u_j\leq m_j$ as well.
Let $\lambda$ and $\mu\geq 0$ be the Lagrange multiplier vectors for the constraints $w_j=u_j$
and $u_j\leq m_j$, respectively.
Intuitively $w$ and $\lambda$ decouple the computation of the utility company and that
of individual EVs through coordination with the station operator. Additionally, $\mu$
decouples and coordinates the EVs' decisions so that EVs do not need
direct communication among themselves to ensure that their decisions $u_{aj}$ collectively satisfy
$u_j\leq m_j$.

Consider the Lagrangian of \eqref{eq:primal problem admm} with these two sets
of constraints relaxed:
\begin{small}
\beq
\begin{split}
L(x, {u},{\lambda}, \mu)  :=  f(x) + g(u) & +  \sum\limits_{j\in\mathbb{N}_w}\lambda_j(w_j-r(M_j-m_j+u_j))
\\
&  + \sum\limits_{j\in\mathbb{N}_w} \mu_j (u_j-m_j)
\label{eq:L}
\end{split}
\eeq\end{small}%
and the dual problem of \eqref{eq:primal problem admm}:
\bqn
\max_{\lambda, \mu\geq 0} & & D(\lambda, \mu) \ := \ \min_{x\in\mathbb X,u\in\hat{\mathbb U}}\ L(x, u, \lambda, \mu)
\eqn
where the constraint set $\hat{\mathbb U} $ on $u$ is:
\bqn
\hat{\mathbb U} & := & \{ u\in \mathbb{R}^{A N_w} \, :\,
	u \text{ satisfies } \eqref{eq:con.tax3.2}\eqref{eq:con.tax1.2}  \}
\eqn
Let $u_a := (u_{aj}, j\in\mathbb N_w)$ denote the column vector
of EV $a$'s decision on which station to swap its battery.   Then the dual problem is
separable in power flow variables $x$ as well as individual EV decisions
$u_a$:
\begin{subequations}
\bq
D(\lambda, \mu) & =: & V(\lambda) \ + \ \sum_{a\in\mathbb A} U_a(\lambda, \mu)
\label{eq:dualF.1}
\eq
where the problem $V(\lambda)$ to be solved by the utility company is:
\bq
V(\lambda) & := &
\min_{x\in\mathbb X} \left( f(x) + \sum_{j\in\mathbb N_w} \lambda_j w_j \right)
\label{eq:dualF.2}
\eq
and  the problem $U_a(\lambda)$ to be solved by each individual EV $a$ is:
\bq
\!\!\!\!\!\!\!\!\!
U_a(\lambda, \mu) & := & \min_{u_a\in \hat{\mathbb U}_a}
	\sum_{j\in\mathbb N_w} \left( \alpha d_{aj} - r \lambda_j + \mu_j \right) u_{aj}
\label{eq:dualF.3}
\eq
where the constraint set $\hat{\mathbb U}_a$ on $u_a$ is:
\beq\nonumber
\ \ \ \ \hat{\mathbb U}_a \ := \ \Big\{ u_a\in \mathbb{R}^{N_w} \, : \!\!\!\!\!\!\!\!\!\!\!\!\!\!\!\!\!\!\!\!\!\!\!\!\!\!\!\!\!\!\!\!
\begin{split}
&  u_{aj}\in\left[0,1\right], \, j\in\mathbb{N}_w \\
&  u_{aj}=0 \  \mathrm{if}~d_{aj} > \gamma_a c_a, \ j\in\mathbb{N}_w \\
& \sum\limits_{j\in\mathbb{N}_w} {u_{aj}}=1
\end{split} \ \Big\}
\eeq
Note that \eqref{eq:dualF.3} entails closed-form solutions. If there exists a unique optimal solution to $U_a(\lambda, \mu)$, i.e., for any EV $a$ there is a unique $j_a^*(\lambda,\mu)$ defined as
\bqn
j_a^*(\lambda, \mu) & := & \arg \min_{j: d_{aj} \le \gamma_a c_a} \{\alpha d_{aj}-r\lambda_j+\mu_j\}
\eqn
then the optimal solution to $U_a(\lambda, \mu)$ can be uniquely determined as
\bqn
u_{aj}(n) & := & \left\{ \begin{array}{lcl}
		1 & \text{if} & j = j_a^*(\lambda, \mu)  \\
		0 & \text{if} & j\neq j_a^*(\lambda, \mu)
		\end{array}  \right.
\eqn
\label{eq:dualF}%
\end{subequations}
i.e., it simply chooses
the unique station $j_a^*$ within EV $a$'s driving range that has the minimum cost
$\alpha d_{aj}-r\lambda_j+\mu_j$.

From \eqref{eq:L} the standard dual algorithm for solving \eqref{eq:primal problem admm}
is: for $j\in\mathbb N_w$,
\begin{subequations}
\begin{small}
\bq
\!\!\! \!\!\! \!\!\!\!\!\!\!\!\!\!\!\!\!\!\!\!\!\!\!\! && \lambda_j(n+1)   :=   \lambda_j(n) \, + \, \rho_1(n) [w_j(n) - r(M_j-m_j+u_j(n))]
\label{eq:DualAlg.1}\\
\!\!\! \!\!\! \!\!\!\!\!\!\!\!\!\!\!\!\!\!\!\!\!\!\!\! && \mu_j(n+1)   :=  \max\{\, \mu_j(n) \, + \, \rho_2(n) (u_j(n) - m_j), 0 \, \}
\label{eq:DualAlg.2}
\eq\end{small}%
where $\rho_1(n), \rho_2(n)>0$ are diminishing stepsizes and,
from \eqref{eq:dualF}, we have
\begin{small}
\bq
\!\!\!\!\!\!
x(n) & \!\!\!\!  := \!\!\!\!  & \arg\min_{x\in\mathbb X}\ \left( f(x) + \sum_{j\in\mathbb N_w} \lambda_j(n) w_j \right)
\label{eq:DualAlg.3}
\\
\!\!\!\!\!\!
u_{a}(n) & \!\!\!\! := \!\!\!\!  & \arg\min_{u_a \in\hat{\mathbb U}_a} \sum_{j\in\mathbb N_w} \left( \alpha d_{aj} - r \lambda_j + \mu_j \right) u_{aj},  \, a\in\mathbb A
\label{eq:DualAlg.4}
\eq\end{small}%
\label{eq:DualAlg}%
\end{subequations}

\begin{remark}
\bee
\item The $x$-update \eqref{eq:DualAlg.3} is carried out by the utility company and
	involves minimizing a convex objective with convex quadratic constraints.
	The only information that is non-local to the utility company for its
	$x$-update is one of the dual variables $\lambda(n)$ computed by the
	station operator.

\item The $u_a$-update \eqref{eq:DualAlg.4} is carried out by each individual EV.
	Each EV requires the dual variables $(\lambda(n), \mu(n))$ from the
	station operator for its update.
	
\item	The dual updates \eqref{eq:DualAlg.1}\eqref{eq:DualAlg.2} are carried out by the
	station operator which uses a (sub)gradient ascent algorithm to solve the dual
	problem $\max_{\lambda, \mu\geq 0} D(\lambda, \mu)$.  It requires $w(n)$
	from the utility company and individual decisions $u_a(n)$ from EVs $a$.
\eee
\end{remark}
The communication structure is illustrated in Fig. \ref{fig:dual}.
\begin{figure}[htbp]
\centering
\includegraphics[width=0.4\textwidth]{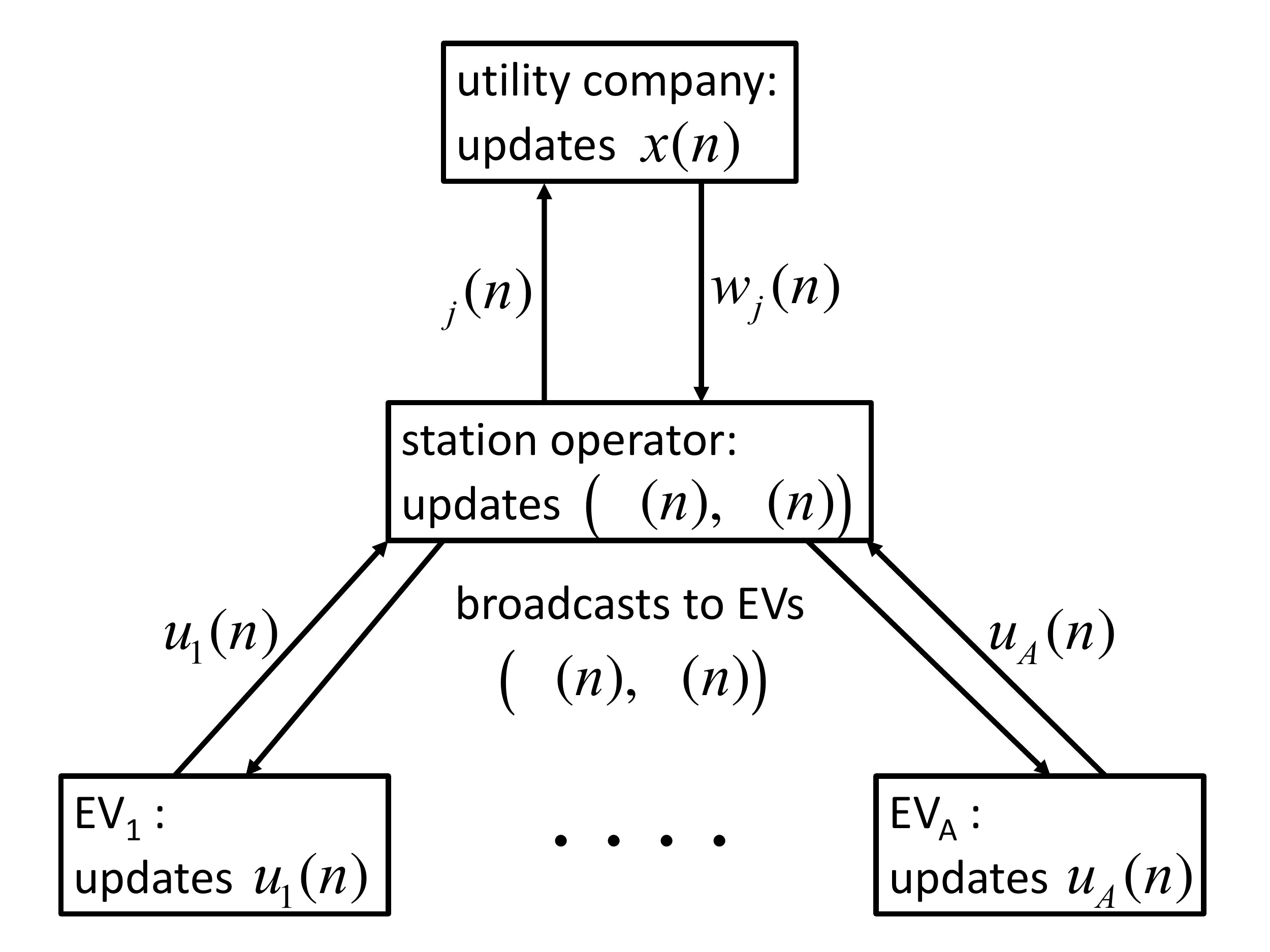}
\caption{Communication between utility company, station operator, and EVs.}
\label{fig:dual}
\end{figure}
In particular, EVs are completely decoupled from the utility company and
among themselves.
Unlike the ADMM-based approach, the station operator knows only the (tentative)
battery swapping decisions of EVs, but not their private information such as their locations
($d_{aj}$), states of charge ($c_{a}$) or performance ($\gamma_{a}$).

Since the relaxation \eqref{eq:primal problem tr} is convex, strong duality holds if Slater's constraint qualification is satisfied. By strong duality, when the above (sub)gradient algorithm converges to a dual optimal solution $(\lambda^*,\mu^*)$, any primal optimal point is also a solution to the corresponding $x$-update \eqref{eq:DualAlg.3} and $u_a$-update \eqref{eq:DualAlg.4} \cite{boyd2003subgradient, boyd2004convex}. Suppose $(x(n),u_a(n),a\in\mathbb A)$ indeed converges to a primal optimal solution $(x^*,u_a^*,a\in\mathbb A)$, typically $(u_a^*,a\in\mathbb A)$ may not be binary. As discussed above, the discretization of $(u_a^*,a\in\mathbb A)$ can be readily implemented since \emph{\textbf{Theorem}~\ref{teo:number}} still holds, and in practice the gap to optimum is small.

\begin{remark}
The two solutions have their own advantages and can be adapted to different application scenarios. Specifically, the ADMM-based one requires a station operator that is trustworthy and can access EVs' private information. Nonetheless, thanks to the station operator that is able to optimize the station assignment on behalf of all EVs, no computation unit is necessary on each EV's end, and communications are only required between the station operator and the utility company, which is practical to realize. In contrast, the dual-decomposition-based one is more distributed in a sense, thus further preserving privacy inherently. However, it necessitates computation capabilities of all EVs. In addition, iterative communications, both between the station operator and the utility company and between the station operator and each individual EV, are required to enable computation. As a result, the deployment of computation units on each EV and communication unreliability may impede the practical implementation of the dual-decomposition-based solution. To conclude, the dual-decomposition-based solution further preserves privacy at the price of communication and computation overheads, compared with the ADMM-based one.
\end{remark}

\section{Numerical results}\label{sec:numerical}

We test the two distributed algorithms on the same 56-bus radial distribution feeder of Southern California Edison (SCE) in Part I. Details about the feeder can be found in \cite{farivar2012optimal}. Similar setups are adopted to demonstrate the algorithm performance.
4 distributed generators and 4 stations are added to the feeder at different buses, and the 4 stations are assumed to be evenly distributed in a 4km$\times$4km square area supplied by the feeder. Table \ref{tab:setup} lists their parameters.\footnote{The units of the real power, reactive power, cost (for the whole control interval), distance and weight in this paper are MW, Mvar, \$, km and \$/km, respectively.} We use examples of $A=400$ EVs that request battery swapping in a certain control interval. We randomize their current locations uniformly within the square area and ignore their destinations. $d_{aj}$ is assumed to be the Euclidean distance, and the driving range constraints are readily satisfied by assuming all EVs can reach any of the 4 stations for illustrative purposes. The constant charging rate is $r=0.01$MW \cite{yilmaz2013review}, and the weight is $\alpha=0.02$\$/km \cite{eia2011annual}. Simulations are run on a laptop with Intel Core i7-3632QM CPU@2.20GHz, 8GB RAM, and 64-bit Windows 10 OS.

\begin{table}[htbp]
  \caption{Setup}\label{tab:setup}
  \begin{center}
  \subtable[Distributed generator]{
  \begin{tabular}{ccccccc}
  \toprule
     Bus & $\overline{p}_j^g $ & $\underline{p}_j^g $ & $\overline{q}_j^g $  & $\underline{q}_j^g $  & Cost function \\
  \midrule
  \rowcolor[gray]{.9}   1 & 4 & 0 & 2 & -2 & $0.3 {p^g}^2 + 30 p^g$ \\
    4 & 2.5 & 0 & 1.5 & -1.5 & $0.1 {p^g}^2 + 20 p^g$ \\
  \rowcolor[gray]{.9}   26 & 2.5 & 0 & 1.5 & -1.5 & $0.1 {p^g}^2 + 20 p^g$ \\
    34 & 2.5 & 0 & 1.5 & -1.5 & $0.1 {p^g}^2 + 20 p^g$ \\
  \bottomrule
  \end{tabular}\label{tab:dgsetup}
  }
  \subtable[Station]{
  \begin{tabular}{ccccccc}
  \toprule
     Bus &  Location & $M_j$ & $m_j$   \\
  \midrule
  \rowcolor[gray]{.9}   5 & (1,1) & $m_j$ &  (i) $A$ \ ; \ (ii) $A/2$  \\
    16 & (3,1) & $m_j$ & \ \ (i) $A$ \ ; \ (ii) $A/10$ \\
  \rowcolor[gray]{.9}  31 & (1,3) & $m_j$ &  (i) $A$ \ ; \ (ii) $A/4$  \\
    43 & (3,3) & $m_j$ &(i) $A$ \ ; \ (ii) $A/4$  \\
  \bottomrule
  \end{tabular}\label{tab:stationsetup}
  }
  \end{center}
\end{table}

As shown in Table \ref{tab:stationsetup}, we test the two distributed algorithms using two cases (i) and (ii) of different $m_j$'s to show their convergence, the suboptimality they can achieve, and the exactness of SOCP relaxation.

\vspace{0.05in}
\noindent
\textbf{Convergence}: The convergence of the ADMM-based algorithm in case (i) is demonstrated in Fig.~\ref{fig:ADMMfig}, where Fig.~\ref{fig:ADMMconv} illustrates that the Lagrange multiplier vector $\lambda$ converges rapidly and Fig.~\ref{fig:ADMMresidues} shows the residual of the relaxed equality constraint \eqref{eq:PPA.3} diminishes acoordingly so as to attain consensus of $w$ and $(u_j,j\in\mathbb{N}_w)$ between the utility company and the station operator. Similar results can be found for case (ii). In terms of the dual-decomposition-based algorithm, Fig.~\ref{fig:ddlambda} and Fig.~\ref{fig:ddmu} show the convergence of its two Lagrange multiplier vectors $\lambda$ and $\mu$ respectively in case (ii). $\lambda$ maintains the consensus between the utility company and EVs at convergence, and $\mu$ guarantees \eqref{eq:con.tax2.2} is satisfied when it converges. Dual decomposition usually takes more iterations to converge due to extra iterative coordination among all EVs by updating $\mu$. For case (i), results are similar except that $\mu$ remains 0 during iterations as \eqref{eq:con.tax2.2} is always satisfied.

\begin{figure}
\centering
\subfigure[]{
	\includegraphics[width=0.35\textwidth]{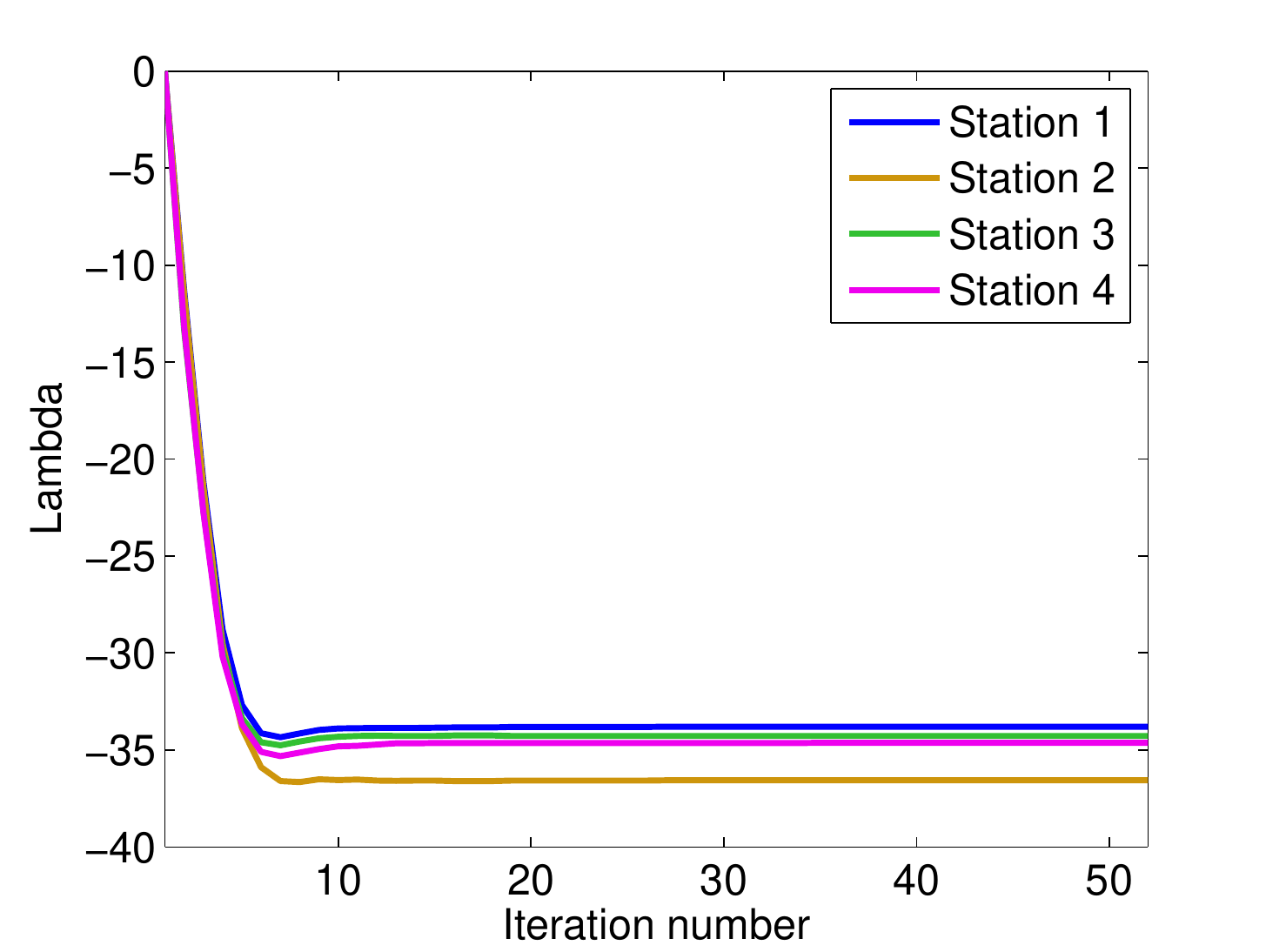}
	\label{fig:ADMMconv}
}
\hspace{-0.22in}
\subfigure[]{
	\includegraphics[width=0.35\textwidth]{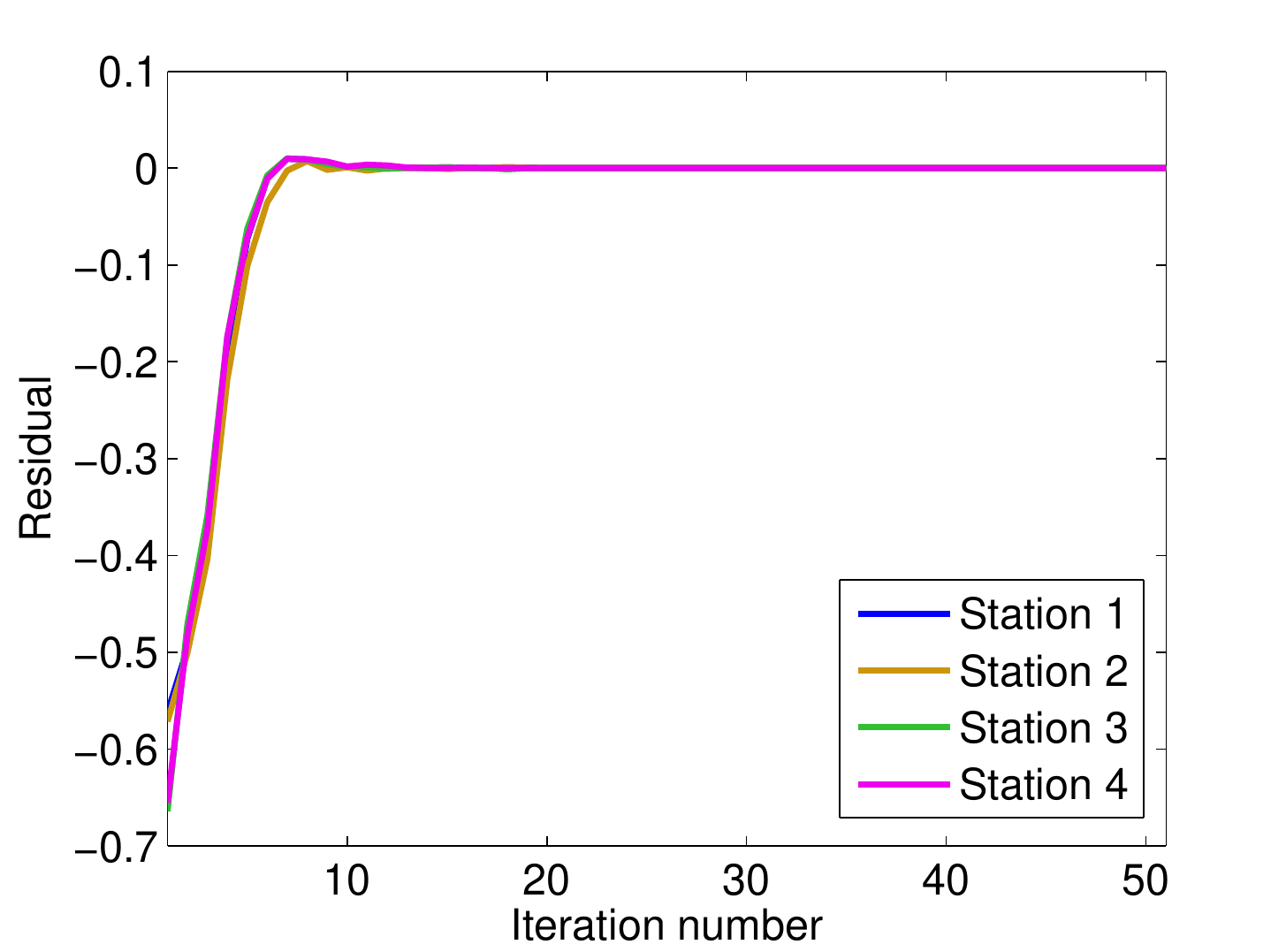}
	\label{fig:ADMMresidues}
}
\caption{Convergence of ADMM (a) $\lambda$. (b) Residual.}
\label{fig:ADMMfig}
\end{figure}

\begin{figure}
\centering
\subfigure[]{
	\includegraphics[width=0.35\textwidth]{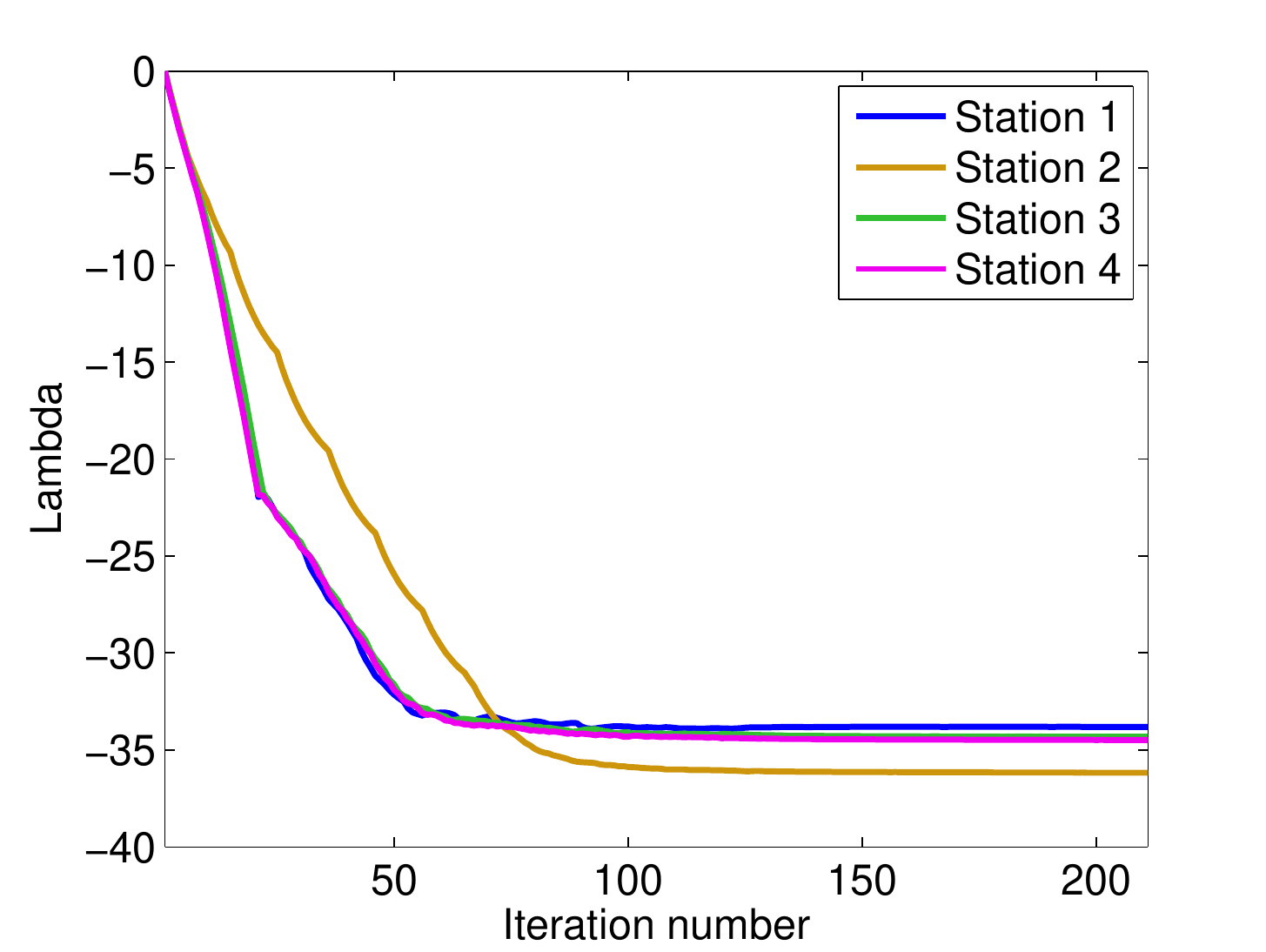}
	\label{fig:ddlambda}
}
\hspace{-0.22in}
\subfigure[]{
	\includegraphics[width=0.35\textwidth]{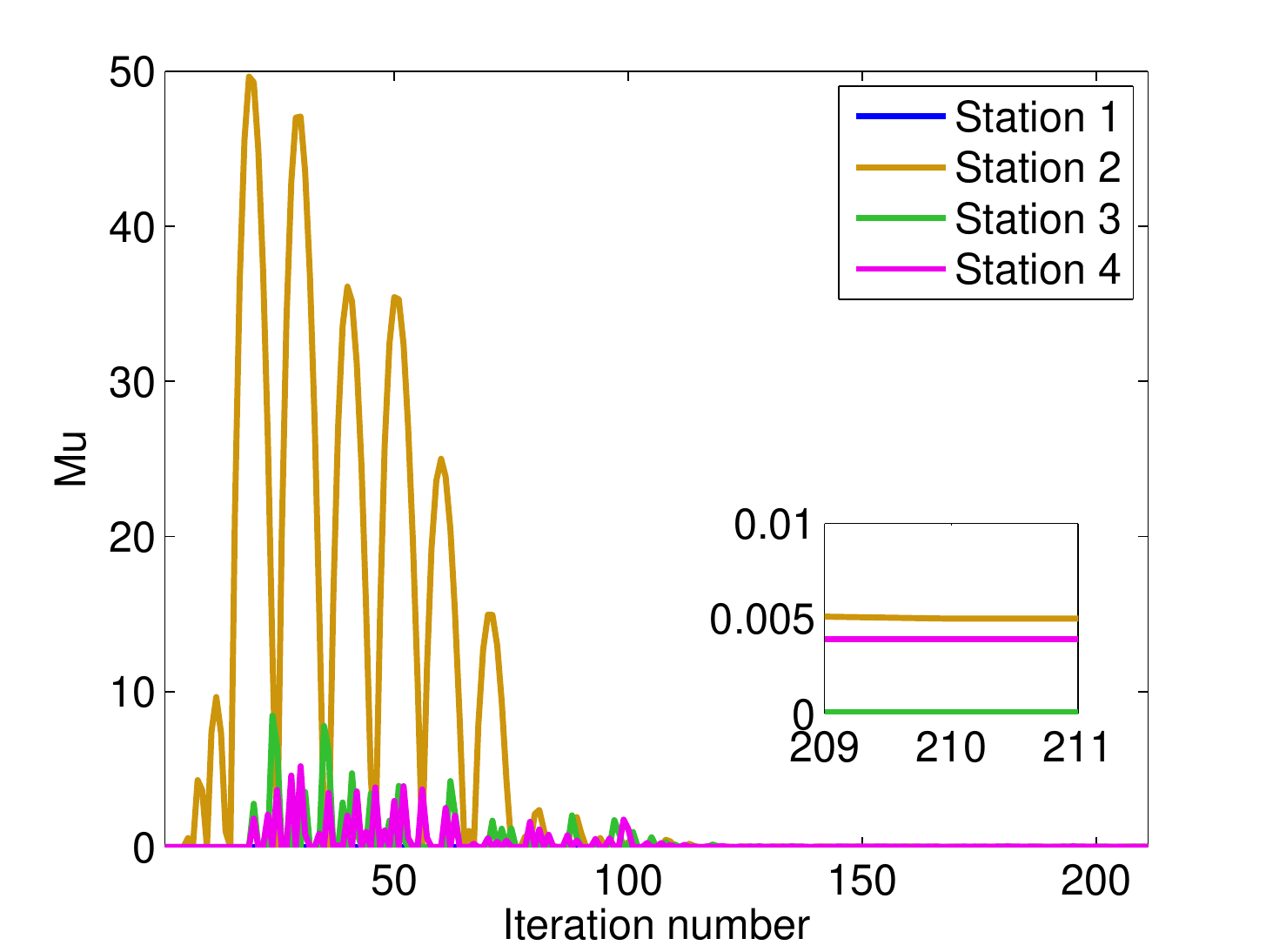}
	\label{fig:ddmu}
}
\caption{Convergence of dual decomposition (a) $\lambda$. (b) $\mu$.}
\label{fig:ddfig}
\end{figure}

\vspace{0.05in}
\noindent
\textbf{Suboptimality (comparison with centralized solution)}: In case (i) both algorithms obtain a solution in which the station assignment to two EVs, marked black in Fig.~\ref{fig:casea}, is non-binary, i.e., $u_{242}=[0.707~ 0.293~ 0.000~ 0.000]$ and $u_{367}=[0.230~ 0.000~ 0.770~ 0.000]$. This is consistent with \emph{\textbf{Theorem}~\ref{teo:number}}. Unlike the centralized solution in Part I that uses global information to compute a globally optimal binary assignment, the distributed algorithms solve the relaxation \eqref{eq:primal problem tr} and are likely to attain a non-binary one, exemplified by case (i). However, we can naively round $u_{243}$ and $u_{367}$ off to discretize the assignment, as shown in Fig.~\ref{fig:casea}, which turns out to be an optimal station assignment of the original problem \eqref{eq:primal problem}, identical with the one computed by the centralized solution in Part I. Typically a heuristic binarization method will yield a suboptimal assignment, but the suboptimality gap is usually trivial due to the bounded number of EVs with a non-binary assignment and their limited impact on the whole system.

In case (ii) we reduce available fully-charged batteries at each station to activate \eqref{eq:con.tax2.2}. Fig.~\ref{fig:caseb} shows the solution achieved by both algorithms which contains a binary assignment, also an optimal station assignment of the original problem \eqref{eq:primal problem}. In this case the relaxation of binary variables is exact. EVs, to which the station assignment is altered due to the bound imposed on battery availability of each station, are marked cyan in Fig.~\ref{fig:caseb}. The intuition is that an active \eqref{eq:con.tax2.2} sometimes can help eliminate the marginal non-binary assignment to certain EVs, and this is often the case in practice where the battery availability is uneven across stations.

\begin{figure}
\centering
\subfigure[]{
	\includegraphics[width=0.35\textwidth]{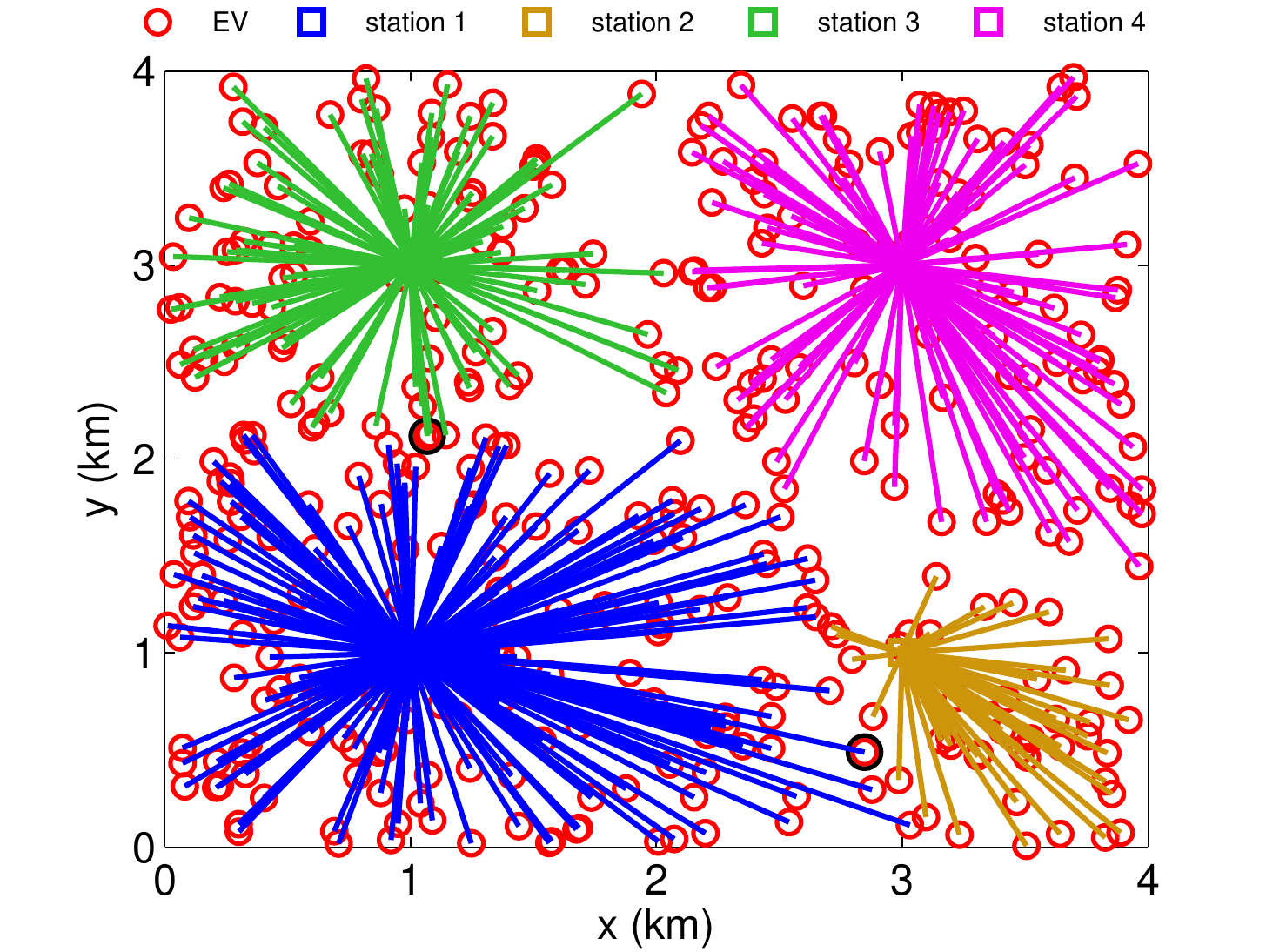}
	\label{fig:casea}
}
\hspace{-0.22in}
\subfigure[]{
	\includegraphics[width=0.35\textwidth]{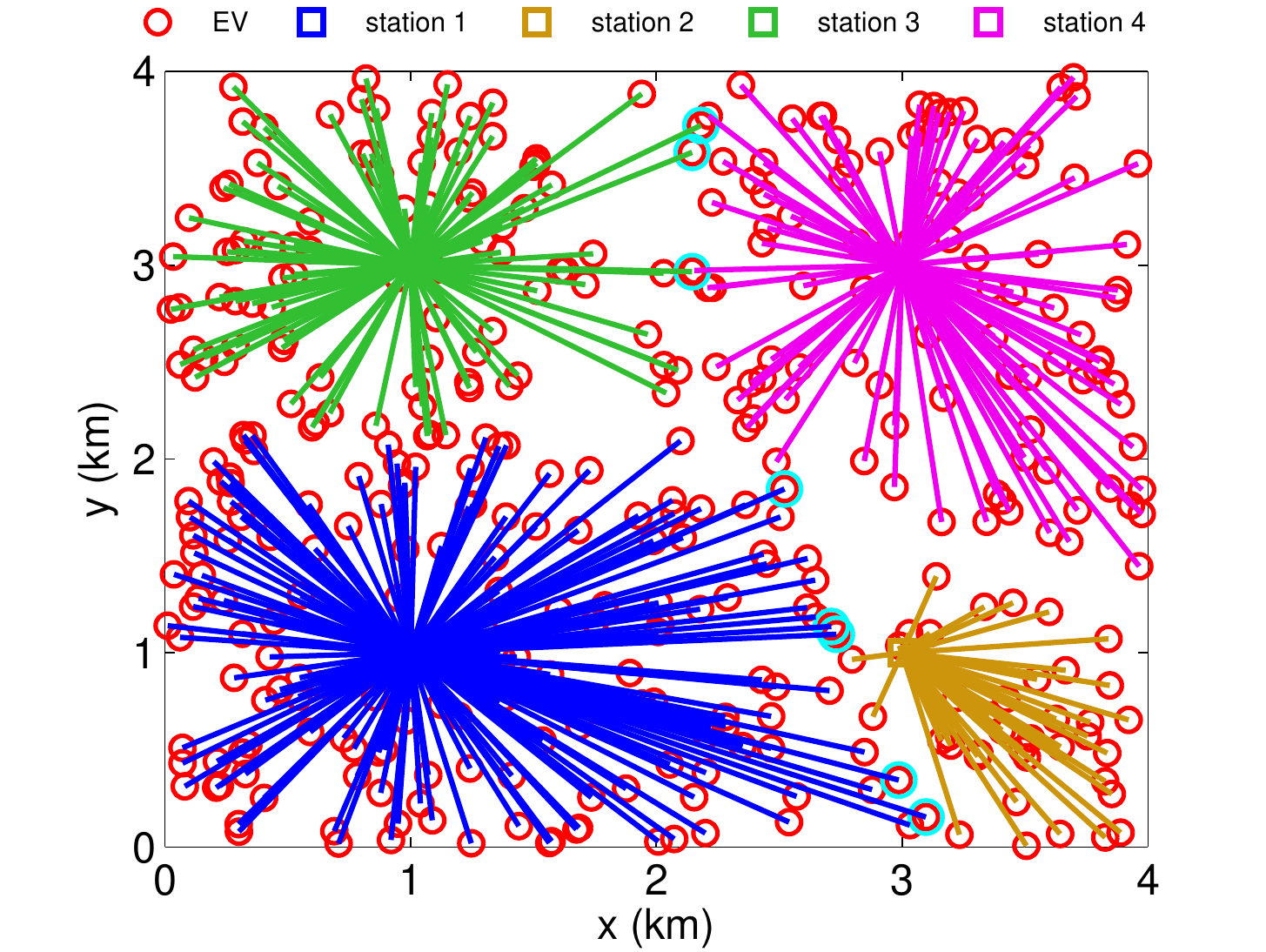}
	\label{fig:caseb}
}
\caption{Suboptimality in different cases (a) case (i). (b) case (ii).}
\label{fig:suboptimality}
\end{figure}

\vspace{0.05in}
\noindent
\textbf{Exactness of SOCP relaxation}: We check whether the above solutions computed by the distributed algorithms attain equality in \eqref{eq:df2.c}, i.e., whether the SOCP relaxation of OPF is exact. Our final results, as well as most tests in other case studies, confirm the exactness of the SOCP relaxation. Only partial result data for case (ii) are listed in Table \ref{tab:exactness} due to space limit. To sum up, the above solutions satisfy power flow equations and are thus implementable.

\begin{table}[htbp]
  \caption{Exactness of SOCP relaxation (partial results for case (ii))}\label{tab:exactness}
  \begin{center}
  \begin{tabular}{ccccc}
  \toprule
  \multicolumn{2}{c}{Bus} & \multirow{2}{*}{ $v_jl_{jk} $} & \multirow{2}{*}{ $|S_{jk}|^2$} & \multirow{2}{*}{Residual}  \\
   From & To & & & \\
  \midrule
  \rowcolor[gray]{.9}   1 & 2 &  2.582 & 2.582 & 0.000 \\
    2 & 3 &  0.006 &  0.006 & 0.000 \\
  \rowcolor[gray]{.9}  2 & 4 & 2.336 & 2.336 & 0.000 \\
    4 & 5 & 3.413 & 3.413 & 0.000 \\
  \rowcolor[gray]{.9}  4 & 6 &  0.005 &  0.005 & 0.000 \\
      4 & 7 & 2.276 & 2.276 & 0.000 \\
  \rowcolor[gray]{.9}  7 & 8 &  1.984 &  1.984 & 0.000 \\
      8 & 9 &  0.009 &  0.009  & 0.000 \\
  \rowcolor[gray]{.9}  8 & 10 & 1.518 & 1.518 & 0.000 \\
        10 & 11 & 1.318 & 1.318 & 0.000 \\
  \bottomrule
  \end{tabular}
  \end{center}
\end{table}

\section{Concluding remarks}\label{sec:conclusion}

This paper is an extension of Part I that solves the same optimal scheduling problem for battery swapping. Instead of resorting to a centralized solution which requires global information, two distributed solutions based on ADMM and dual decomposition respectively are proposed, considering the fact that the grid, battery stations and EVs are likely operated by different entities, which do not share their private information. The distributed algorithms allow these entities to make individual decisions but coordinate through privacy-preserving information exchanges to jointly solve an approximate version of the joint battery swapping scheduling and OPF problem. Finally, the obtained station assignment may not be binary, but we prove that the number of EVs for which a binary assignment needs to be derived from the obtained non-binary one is small, and thus a station assignment that is close to optimum is expected to be attained. Numerical tests on the SCE 56-bus distribution feeder demonstrate the algorithm performance and validate our analysis.

\bibliographystyle{ieeetr}
\bibliography{bib2}

\begin{thebibliography}{10}

\bibitem{Low2014a}
S.~H. Low, ``Convex relaxation of optimal power flow, {I}: formulations and
  relaxations,'' {\em IEEE Trans. on Control of Network Systems}, vol.~1,
  no.~1, pp.~15--27, 2014.

\bibitem{Low2014b}
S.~H. Low, ``Convex relaxation of optimal power flow, {II}: exactness,'' {\em
  IEEE Trans. on Control of Network Systems}, vol.~1, no.~2, pp.~177--189,
  2014.

\bibitem{mcdaniel2009security}
P.~McDaniel and S.~McLaughlin, ``Security and privacy challenges in the smart
  grid,'' {\em IEEE Security and Privacy}, vol.~7, no.~3, pp.~75--77, 2009.

\bibitem{kalogridis2010privacy}
G.~Kalogridis, C.~Efthymiou, S.~Z. Denic, T.~A. Lewis, and R.~Cepeda, ``Privacy
  for smart meters: Towards undetectable appliance load signatures,'' in {\em
  Proc. of IEEE International Conference on Smart Grid Communications
  (SmartGridComm)}, pp.~232--237, 2010.

\bibitem{efthymiou2010smart}
C.~Efthymiou and G.~Kalogridis, ``Smart grid privacy via anonymization of smart
  metering data,'' in {\em Proc. of IEEE International Conference on Smart Grid
  Communications (SmartGridComm)}, pp.~238--243, 2010.

\bibitem{liu2016optimal}
E.~Liu, P.~You, and P.~Cheng, ``Optimal privacy-preserving load scheduling in
  smart grid,'' in {\em Proc. of IEEE Power \& Energy Society General Meeting},
  pp.~1--5, 2016.

\bibitem{yang2014optimal}
L.~Yang, X.~Chen, J.~Zhang, and H.~V. Poor, ``Optimal privacy-preserving energy
  management for smart meters,'' in {\em Proc. of IEEE Conference on Computer
  Communications (INFOCOM)}, pp.~513--521, 2014.

\bibitem{tseng2012secure}
H.-R. Tseng, ``A secure and privacy-preserving communication protocol for {V2G}
  networks,'' in {\em Proc. of IEEE Wireless Communications and Networking
  Conference (WCNC)}, pp.~2706--2711, 2012.

\bibitem{liu2012aggregated}
H.~Liu, H.~Ning, Y.~Zhang, and L.~T. Yang, ``Aggregated-proofs based
  privacy-preserving authentication for {V2G} networks in the smart grid,''
  {\em IEEE Trans. on Smart Grid}, vol.~3, no.~4, pp.~1722--1733, 2012.

\bibitem{nicanfar2013security}
H.~Nicanfar, P.~TalebiFard, S.~Hosseininezhad, V.~Leung, and M.~Damm,
  ``Security and privacy of electric vehicles in the smart grid context:
  problem and solution,'' in {\em Proc. of ACM International Symposium on
  Design and Analysis of Intelligent Vehicular Networks and Applications},
  pp.~45--54, 2013.

\bibitem{clifton2002tools}
C.~Clifton, M.~Kantarcioglu, J.~Vaidya, X.~Lin, and M.~Y. Zhu, ``Tools for
  privacy preserving distributed data mining,'' {\em ACM Sigkdd Explorations
  Newsletter}, vol.~4, no.~2, pp.~28--34, 2002.

\bibitem{zhou2015psmpa}
J.~Zhou, X.~Lin, X.~Dong, and Z.~Cao, ``Psmpa: Patient self-controllable and
  multi-level privacy-preserving cooperative authentication in
  distributedm-healthcare cloud computing system,'' {\em IEEE Trans. on
  Parallel and Distributed Systems}, vol.~26, no.~6, pp.~1693--1703, 2015.

\bibitem{liu2016distributed}
M.~Liu, R.~H. Ord{\'o}{\~n}ez-Hurtado, F.~Wirth, Y.~Gu, E.~Crisostomi, and
  R.~Shorten, ``A distributed and privacy-aware speed advisory system for
  optimizing conventional and electric vehicle networks,'' {\em IEEE Trans. on
  Intelligent Transportation Systems}, vol.~17, no.~5, pp.~1308--1318, 2016.

\bibitem{you2016EVscheduling1}
P.~You, S.~H. Low, W.~Tushar, G.~Geng, C.~Yuen, Z.~Yang, and Y.~Sun,
  ``Scheduling of {EV} battery swapping, {I}: centralized solution,'' {\em
  arXiv preprint arXiv:1611.07943}, 2016.

\bibitem{baran1989optimals}
M.~E. Baran and F.~F. Wu, ``Optimal sizing of capacitors placed on a radial
  distribution system,'' {\em IEEE Trans. on Power Delivery}, vol.~4, no.~1,
  pp.~735--743, 1989.

\bibitem{boyd2011distributed}
S.~Boyd, N.~Parikh, E.~Chu, B.~Peleato, and J.~Eckstein, ``Distributed
  optimization and statistical learning via the alternating direction method of
  multipliers,'' {\em Foundations and Trends{\textregistered} in Machine
  Learning}, vol.~3, no.~1, pp.~1--122, 2011.

\bibitem{boyd2003subgradient}
S.~Boyd, L.~Xiao, and A.~Mutapcic, ``Subgradient methods,'' {\em lecture notes
  of EE392o, Stanford University, Autumn Quarter}, vol.~2004, pp.~2004--2005,
  2003.

\bibitem{boyd2004convex}
S.~Boyd and L.~Vandenberghe, {\em Convex optimization}.
\newblock Cambridge university press, 2004.

\bibitem{farivar2012optimal}
M.~Farivar, R.~Neal, C.~Clarke, and S.~Low, ``Optimal inverter {VAR} control in
  distribution systems with high {PV} penetration,'' in {\em Proc. of IEEE
  Power \& Energy Society General Meeting}, pp.~1--7, 2012.

\bibitem{yilmaz2013review}
M.~Yilmaz and P.~T. Krein, ``Review of battery charger topologies, charging
  power levels, and infrastructure for plug-in electric and hybrid vehicles,''
  {\em IEEE Trans. on Power Electronics}, vol.~28, no.~5, pp.~2151--2169, 2013.

\bibitem{eia2011annual}
U.~EIA, ``Annual energy review,'' {\em Energy Information Administration, US
  Department of Energy: Washington, DC www. eia. doe. gov/emeu/aer}, 2011.

\end{thebibliography}

\appendices
\section{Proof of Theorem \ref{teo:number}}
\label{sec:proof}

We refer to EV $a$ as a \emph{critical} EV if $u_{aj}^*<1$ for all $j\in\mathbb{N}_w$. We first show the following lemma always holds, on which basis \emph{\textbf{Theorem}~\ref{teo:number}} is then proved.

\begin{lemma}\label{lem:critical}
There exists an optimal solution with no $u_{aj},u_{ak},u_{bj},u_{bk}>0$ for $\forall a,b\in\mathbb{A}$ and $\forall j,k\in\mathbb{N}_w$, i.e., if a solution contains two critical EVs $a,b\in\mathbb{A}$ with $u_{aj},u_{ak},u_{bj},u_{bk}>0$ for certain $j,k\in\mathbb{N}_w$, there always exists a better one.
\end{lemma}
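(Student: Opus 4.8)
The plan is to prove the lemma by a local exchange perturbation applied to a well-chosen optimal solution. Among all optimal solutions of the relaxation \eqref{eq:primal problem tr}, fix one, $(x^\star,u^\star)$, that minimizes the number of strictly positive assignment entries, $|\{(a,j): u^\star_{aj}>0\}|$; such a minimizer exists since this count is a nonnegative integer bounded by $A N_w$. I will argue that this $(x^\star,u^\star)$ cannot contain a configuration $u^\star_{aj},u^\star_{ak},u^\star_{bj},u^\star_{bk}>0$ with distinct EVs $a\ne b$ and distinct stations $j\ne k$ (the degenerate cases $a=b$ or $j=k$ being vacuous), which is exactly the assertion of the lemma.

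Suppose for contradiction such a configuration exists, and consider the perturbed assignment obtained by $u_{aj}\mapsto u^\star_{aj}+\epsilon$, $u_{ak}\mapsto u^\star_{ak}-\epsilon$, $u_{bj}\mapsto u^\star_{bj}-\epsilon$, $u_{bk}\mapsto u^\star_{bk}+\epsilon$, with all other entries of $u^\star$ and all of $x^\star$ left unchanged. The bookkeeping to verify is: (i) the per-EV totals $\sum_{\ell\in\mathbb N_w}u_{a\ell}$ and $\sum_{\ell\in\mathbb N_w}u_{b\ell}$ are unchanged, so \eqref{eq:con.tax1.2} still holds; (ii) every per-station total $u_\ell=\sum_{a\in\mathbb A}u_{a\ell}$ is unchanged — the $\pm\epsilon$ changes at stations $j$ and $k$ cancel between EVs $a$ and $b$, and no other station is touched — so \eqref{eq:con.tax2.2} still holds and, crucially, the coupling term $r(M_\ell-m_\ell+u_\ell)$ entering the injections \eqref{eq:inj.p} is unchanged, so $x^\star$ stays feasible and $f(x^\star)$ stays put; (iii) for $|\epsilon|$ small enough all four perturbed entries remain in $[0,1]$ (upper bounds from $u^\star_{aj}+u^\star_{ak}\le 1$ and $u^\star_{bj}+u^\star_{bk}\le 1$, lower bounds from strict positivity), and the driving-range restriction \eqref{eq:con.tax3.2} is preserved because each of the four entries is already strictly positive, which forces the relevant $d_{aj}$ etc.\ to satisfy the range bound (the two entries that increase were already allowed to be positive, the two that decrease trivially stay feasible). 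Hence the perturbation is feasible for small $\epsilon$ of either sign, and the only change in the objective is $\alpha\,\epsilon\,(d_{aj}-d_{ak}-d_{bj}+d_{bk})=:\alpha\,\epsilon\,\Delta$.

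The argument then closes by a case split on $\Delta$. If $\Delta\ne 0$, taking $\epsilon$ small with sign opposite to that of $\Delta$ strictly decreases the objective while keeping feasibility, contradicting optimality of $(x^\star,u^\star)$. If $\Delta=0$, increase $\epsilon$ up to $\epsilon^\star:=\min\{u^\star_{ak},u^\star_{bj}\}>0$: the resulting point is still feasible with the same (hence optimal) objective value, but at least one of the entries $u^\star_{ak},u^\star_{bj}$ has become zero, so it has strictly fewer positive entries than $(x^\star,u^\star)$, contradicting minimality. Either way we reach a contradiction, so $(x^\star,u^\star)$ has no such configuration, which proves the lemma.

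I expect the one genuinely substantive step to be observation (ii): recognizing that the exchange cancels at the station level, so that every $u_\ell$ and hence the whole OPF block $x^\star$ (and the cost $f$) are untouched, and the net objective change reduces to the linear travel-distance term. Everything else — the simplex and box constraints, the range constraints, and termination of the descent toward a minimal-support optimum — is routine once that decoupling is in place. It is also worth noting for the subsequent proof of Theorem~\ref{teo:number} that the lemma says precisely that the supports $\{j:u^\star_{aj}>0\}$ of distinct critical EVs pairwise share at most one station, which is what will bound the number of critical EVs by $N_w(N_w-1)/2$.
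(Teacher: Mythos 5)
Your proof is correct and rests on the same key observation as the paper's: the $2\times2$ exchange between EVs $a,b$ and stations $j,k$ leaves every per-station total $u_\ell$ (hence the injections \eqref{eq:inj.p}, the whole OPF block, and $f(x^\star)$) untouched, so only the linear travel-distance term moves, and one of the two exchange directions cannot increase it. Your execution is in fact tighter than the paper's enumeration of distance orderings: the signed-$\epsilon$ perturbation with the single quantity $\Delta=d_{aj}-d_{ak}-d_{bj}+d_{bk}$ subsumes all their cases, and the minimal-support selection makes rigorous the tie case $\Delta=0$, which the paper only handles with the informal remark that one can ``find solutions of equal optimum that only render either EV $a$ or EV $b$ critical.''
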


\begin{proof}
We prove \emph{\textbf{Lemma}~\ref{lem:critical}} by contradiction.

Suppose \eqref{eq:primal problem tr} is solved by the proposed algorithms and we obtain the primal optimal solution $({u}^*,{x}^*)$, which contains the assignment of two stations to two same critical EVs, i.e., $u_{aj}^*,u_{ak}^*,u_{bj}^*,u_{bk}^*>0$. We focus on the two-EV-two-station subsystem and fix its impact on the remaining parts of the whole system, e.g., the distribution network, other stations and EVs. Note that EV $a$ and EV $b$ have $r B_a$ and $r B_b$ charging loads to distribute, respectively, where $B_a:=u_{aj}^*+u_{ak}^*$ and $B_b:=u_{bj}^*+u_{bk}^*$. Meanwhile, the two EVs yield in total $r B_j$ and $r B_k$ charging loads at the buses of station $j$ and $k$, respectively, where $B_j:=u_{aj}^*+u_{bj}^*$ and $B_k:=u_{ak}^*+u_{bk}^*$. Apparently, $B_a+B_b=B_j+B_k$. Since $a$ and $b$, as well as $j$ and $k$, are indiscriminative, without loss of generality we assume case 1: $B_a\ge B_j \ge B_k \ge B_b$ and case 2: $ B_j \ge B_a \ge B_b \ge B_k $ to cover all possibilities. Below we will take case 1 as an illustrative example to go on with the proof, and case 2 shares the similar property.

Obviously the above circumstance $u_{aj}^*,u_{ak}^*,u_{bj}^*,u_{bk}^*>0$ would only occur when subcase 1': $d_{aj}\le d_{bj},d_{ak}\le d_{bk}$ or subcase 2': $d_{bj}\le d_{aj},d_{bk}\le d_{ak}$. Otherwise, EV $a$ and EV $b$ would have a bias for different stations in terms of the travel distance. For example, if $d_{aj}\le d_{bj},d_{bk}\le d_{ak}$, the two-EV-two-station subsystem will benefit if EV $a$ goes to station $j$ and EV $b$ goes to station $k$ with priority, i.e., $u'_{aj}=B_j,u'_{ak}=B_k-B_b,u'_{bj}=0,u'_{bk}=B_b$ with the rest of the optimal solution $({u}^*,{x}^*)$ is a better solution. Because the remaining parts of the whole system is not affected, which means the OPF solution and other EVs' total travel distance won't change, but the travel distance of the two-EV-two-station subsystem will decrease.

Again we take subcase 1' as an example, which we call case 11'. In this case, we may have
\beq\label{eq:case1}
\begin{split}
& d_{bj}- d_{bk}> d_{aj}-d_{ak}\\
\Longleftrightarrow ~& u_{aj}^*d_{aj}+u_{ak}^*d_{ak}+u_{bj}^*d_{bj}+u_{bk}^*d_{bk} \\
& >  B_j d_{aj} + (B_k-B_b) d_{ak} + B_b d_{bk}
\end{split}
\eeq
or
\beq\label{eq:case2}
\begin{split}
& d_{aj}-d_{ak}> d_{bj}- d_{bk}\\
\Longleftrightarrow ~& u_{aj}^*d_{aj}+u_{ak}^*d_{ak}+u_{bj}^*d_{bj}+u_{bk}^*d_{bk} \\
& > (B_j-B_b )d_{aj} + B_k d_{ak} + B_b d_{bj}
\end{split}
\eeq
or
\beq\label{eq:case3}
\begin{split}
& d_{aj}-d_{ak}= d_{bj}- d_{bk}\\
\Longleftrightarrow ~& u_{aj}^*d_{aj}+u_{ak}^*d_{ak}+u_{bj}^*d_{bj}+u_{bk}^*d_{bk} \\
& =  B_j d_{aj} + (B_k-B_b) d_{ak} + B_b d_{bk}\\
& = (B_j-B_b )d_{aj} + B_k d_{ak} + B_b d_{bj}
\end{split}
\eeq
In the cases of \eqref{eq:case1} and \eqref{eq:case2}, there is a better solution that decreases the travel distance of the two-EV-two-station subsystem, while the remaining parts of the whole system remain the same. This conflicts the original assumption that $({u}^*,{x}^*)$ is the optimal solution. In the case of \eqref{eq:case3}, still we can find solutions of equal optimum that only render either EV $a$ or EV $b$ critical.

Likewise, case 12', case 21' and case 22' share the same conclusion. As a result, there always exists an optimal solution with no $u_{aj},u_{ak},u_{bj},u_{bk}>0$ for $\forall a,b\in\mathbb{A}$ and $\forall j,k\in\mathbb{N}_w$.



This completes the proof.
\end{proof}

\vspace{0.1in}
Based on \emph{\textbf{Lemma}~\ref{lem:critical}}, we now go on to prove \emph{\textbf{Theorem}~\ref{teo:number}}.
\begin{proof}
According to the definition of a critical EV, its charging load will split into at least two parts that are distributed to different stations. The problem is currently transformed to how many critical EVs we can assign the $N_w$ stations to at most without violating \emph{\textbf{Lemma}~\ref{lem:critical}}.

This is a basic assignment problem. Obviously, assume every critical EV only splits into two parts at best, i.e., at most two station can be assigned to each critical EV. Then we need to assign the $N_w$ stations to as many critical EVs as possible without repeats. We start from assigning two consecutively indexed stations to each critical EV, i.e., stations 1 and 2 to one critical EV, and stations 2 and 3 to another, etc. Note that stations $N_w$ and 1 are not consecutive. By this means, we can assign stations to at most $N_w-1$ critical EVs. Then two stations with a one-index gap are assigned to each critical EV, i.e., stations 1 and 3 to one critical EV, and stations 2 and 4 to another, etc, by which means we can assign stations to at most $N_w-2$ critical EVs. By analogy, we finally assign two stations with an $(N_w-2)$-index gap to each critical EV, and will find there is at most only 1 critical EV that we can assign stations to. Therefore, in total we are able to assign the $N_w$ stations to at most $(N_w-1)+(N_w-2)+\dots+1=\frac {N_w(N_w-1)}{2}$ critical EVs, so as to satisfy \emph{\textbf{Lemma}~\ref{lem:critical}}.

This completes the proof.
\end{proof}

\end{document}